\newcommand{\di}{\mbox {dist}}
\newcommand{\vy}{\underline{y}}
\newcommand{\vz}{\underline{z}}
\newcommand{\vx}{\underline{x}}
\newcommand{\vd}{\underline{d}}
\newcommand{\va}{\underline{a}}
\newcommand{\vb}{\underline{b}}
\newcommand{\vc}{\underline{c}}
\newcommand{\M}{\mathfrak{M}}
\newtheorem{lemma}{Lemma}[section]
\newtheorem{defn}[lemma]{Definition}
\newtheorem{axiom}{Axiom}
\newtheorem{cor}[lemma]{Corollary}
\newtheorem{remark}[lemma]{Remark}
\newtheorem{prop}[lemma]{Proposition}
\newtheorem{thm}[lemma]{Theorem}
\title{Metric spaces and SDG}
\author{Anders Kock}
\date{}
\begin{document}
\maketitle

\section*{Introduction} The first use of the term ``Synthetic differential geometry'' seems 
to be in Busemann's \cite{BuJ}, \cite{BuR} (1969 and 1970), so it is prior to the 
use of the term in, say, \cite{SDG}, where the reasoning goes in 
a different direction.  In Busemann's work, the basic 
structure is that of  {\em metric space}. This notion has not been much 
considered in the context of SDG\footnote{We shall use the acronym ``SDG'' when 
referring to the school originating with Lawvere's ``Categorical 
Dynamics'' talk (1967) and the KL Axiom (``Kock-Lawvere''), see \cite{SDG}, \cite{MR}, 
\cite{Lav}, \cite{SGM}, \ldots } (except 
in its infinitesimal form: {\em Riemannian} metric).
One reason for this 
is: to provide the number line $R$ with the standard metric, 
one needs the absolute value function $R\to R$, given by  $x\mapsto 
|x|$. 
This map, however, is not smooth at $0$; and in SDG, only smooth maps can be considered.

However, the geometric reasoning of Busemann has a genuine 
synthetic character.  It does admit co-existence with SDG, which I hope 
that the present note will illustrate.

\section{Metric spaces and the neighbour relation}\label{msnrx}
To have a metric on a set $M$, one needs a number line $R$ to receive 
the values of the metric, usually the ring of real numbers.

Recall that it is essential for SDG that the number line $R$, with its  
ring structure, has a rich supply of nilpotent elements, in 
particular, elements $\epsilon$ with $\epsilon ^{2 }=0$. Such 
nilpotent elements lead to the geometric notion of when two points in 
a manifold are (first order) {\em neighbours}, written $x\sim y$. 
When the manifold itself is the number line $R$, then $x\sim y$ will mean 
$(x-y)^{2}=0$.

How do such nilpotent elements coexist with the metric? 
We hope to demonstrate not only that they do coexist, but they enhance Busemann's metric-based 
differential geometric notions, by allowing a notion for when two subspaces of $M$ {\em touch} each other (tangency), leading to e.g.\ the envelopes and wave fronts, occurring already in Huygens' work. 

We shall 
study this axiomatically, with  intended  application only for the special 
case where the metric space $M$ is 
just a Euclidean space, built on basis of the given number line $R$.
In particular we study ``lines'' (or rays or geodesics) in $M$. Lines occur as a derived concept only. 
 We are not using the full range of  algebraic 
properties of $R$, but only the addition and order properties of the 
positive part $R_{>0}$. 
A model for the axiomtaics are presented in Section \ref{modelx}; it depends on having a model for the axiomatics of SDG. Therefore, it may be that no models exist in the category of (boolean) sets. We are really talking about interpretations and models of the theory  in some topos $\mathcal{E} $ or other suitable category; nevertheless, we shall talk about the objects in  $\mathcal{E}$, as if they were just sets. This is the common practice in SDG.

    \subsection{A basic picture}\label{baspx}

``{\em The shortest path between two points is the straight line.}''

\medskip

This may be seen as a way of describing the concept of ``straight 
line'' in terms of the more primitive concept of {\em distance} 
(which may be measured by ``how long time does it 
take to go from the one point to the other'' -- like ``optical 
distance'' in geometrical optics, cf.\ e.g.\ \cite{Arnold}). 

Consider an obtuse triangle, with height 
$\epsilon$ at the obtuse angle (cf.\ figure below). In coordinates, with $b$ as origo  
$(0,0)$,
 the vertices are $a=(-r,0)$, $b'= 
(0,\epsilon )$, and $c= (s,0)$. 
The height divides the triangle in two right triangles: the one triangle
has catheti of lengths $r$ and $\epsilon$, and the other one has catheti of 
lengths $\epsilon$ and $s$. If $\epsilon ^{2}=0$, the length  of the 
hypotenuse of the
first triangle is then, by Pythagoras, $\sqrt{r^{2}+\epsilon ^{2}} = 
\sqrt{r^{2}} =r$,  
and the length of the hypotenuse of the other triangle is similarly 
$s$.

So the path from $a$ to $c$ via $b'$ has length $r+s$, just as the 
straight line from $a$ to $c$.

What distinguishes, then, in terms of length, the straight line from 
the path via $b'$? Both have length 
$r+s$, the minimal possible length.

\begin{picture}(80,60)(-120,20)
\put(20,30){\line(1,0){95}}
\put(20,30){\line(4,1){70}}
\put(90,30){\line(0,1){18}}
\put(90,47){\line(3,-2){27}}
\put(20,20){$a$}
\put(86,20){$b$}
\put(53,16){$r$}
\put(86,50){$b'$}
\put(82,35){$\epsilon$}
\put(115,20){$c$}
\put(100,16){$s$}
\end{picture}
\begin{equation}\label{bpx}
{}
\end{equation}

The answer is not in terms of extremals, but in terms of {\em 
stationary} or  {\em critical} values ; the length of the path via $b$ is stationary for 
``infinitesimal variations'' (e.g.\ via $b'$), unlike the path via 
$b'$, in the sense that we shall make precise by the notion of {\em 
focus}: $b$ is the focus of the set of points 
$b'\sim b$ with distance $r$ to $a$ and distance $s$ to $c$.

\medskip

We intend here to give an axiomatic theory, involving a set $M$ equipped with a metric $\di$ (in a certain restricted sense we shall make precise below), and a reflexive symmetric "neighbour" relation $\sim$. The metric is assumed to take values in an unspecified number line $R$ with a total strict order relation $>$;
we will only use a few  properties of the number line $R$, namely the additive and order-properties of $R_{>0}$.

The axiomatics which we present has models, built on basis of (models of) SDG; we relegate the discussion of this until the end of the paper (Section \ref{modelx}) , to stress the fact that the theory we develop is in principle prior to any coordinatization of the geometric material.

   \subsection{Metric spaces}\label{MSx}

A {\em metric space} is a set $M$ equipped with\footnote{one may take "$x\# y$" to mean  "$x\neq y$,
in which case it is not an added structure; however, our reasoning will not involve any negated assertions.}
  an apartness relation $\#$, 
assumed symmetric, and a symmetric function 
$\di : M\times_{\#} M \to R_{>0}$,
i.e.\ 
$\di (a,b)= \di (b,a)$ 
for all $a, b$ in $M$ with $a$ and $b$ 
apart (here, $M\times_{\#} M$ denotes the set of $(a,b)\in M \times M$ 
with $a\# b$). Since 
$\di (a,b)$
 will appear in quite a few formulae, we use 
Busemann's 
short notation:
$$\di (a,b) \mbox {\quad is denoted \quad} ab.$$ 

\medskip

The triangle inequality, $ac\leq ab+bc$ will play no role in the 
present 
note, except when it happens to be an equality, $ac=ab+bc$ (which is a property that a triple of points 
$a,b$, $c$ may or may not have). In fact, we will not be using  
the relation $\leq$  in the present note. 

 \medskip

We follow Busemann in writing $(abc)$ for the statement that
 the triangle equality
holds for three points $a,b$, $c$ (mutually apart); thus
$$(abc) \mbox{\quad  means \quad } ab+bc=ac.$$

Note that $(abc)$ implies $ab<ac$ and $bc<ac$.
Classically, $(abc)$ is expressed verbally: ``the points $a,b,c$ are 
collinear (with $b$ in between $a$ and $c$)". But $(abc)$ will be weaker than collinearity, in our 
context: 
for, $(ab'c)$ holds in the basic picture (\ref{bpx}) above, but $a,b'$ $c$ 
are not collinear. We shall 
below give a stronger notion $[abc]$ of collinearity. 


\medskip

The sphere 
$S(a,r)$ with center $a$ and  radius $r>0$ is defined by
$$S(a,r):= \{b \in M \mid ab=r \}.$$


   \subsection{The neighbour relation $\sim$}\label{nbrx}
 Some uses of the (first order) neighbour relation $\sim$ were 
described in \cite{SDG} \S I.7, 
and the neighbour relation is the basic notion in \cite{SGM}.
Knowledge of these or related SDG  texts is not needed in the 
following, except for 
where we, in Section \ref{modelx}  construct a 
model of the present axiomatics.
 
Objects $M$ equipped with a reflexive 
symmetric relation $\sim$, we call {\em manifolds}, for the present 
note. Any subset $A$ of $M$ inherits a manifold structure from 
$M$, by restriction. The manifolds we consider in the present note 
are such subsets of a fixed $M$. 

The motivating examples (discussed in Section \ref{modelx} ) are the $n$-dimensional coordinate vector 
spaces $R^{n}$ over $R$, where $\vx \sim \vy$ means $(y_{i}-x_{i})\cdot 
(y_{j}-x_{j})=0$ for all $i,j = 1, \ldots ,n$ (where $\vx = (x_{1}, 
\ldots ,x_{n})$ and similarly for $\vy$). We
 refer the reader to the SDG literature (notably \cite{SGM}) for an 
exploitation of the notion for more general manifolds. The main 
aspect is that any (smooth) map preserves $\sim$. Note that in 
particular $x\sim 0$ in $R$ iff $x^{2}=0$. We shall also in the axiomatic treatment, e.g.\ in the proof of Lemma \ref{mainx}, assume that all maps constructed  preserve $\sim$.

\medskip

If $M$ is furthermore equipped with a metric, as described in the 
previous Subsection, there is a compatibility requirement, namely $x\# y$ 
and $y\sim y'$ implies $x\# y'$; and there is an incompatibility requirement: 
$x\# y$ and $x\sim y$ are incompatible, i.e.\ $\neg ( (x\# y )\wedge (x\sim 
y))$.

\medskip

It is useful to make explicit the way the metric and the neighbour 
relation ``interact'', in the case where $M$ is  the number line $R$ itself,
where we take the distance $xy$ to mean $|y-x|$ (for $x\# y$) and take 
$x\sim y$ to mean $(y-x)^{2 }=0$. Note that the numerical-value 
function used here is  smooth on the set points $x\# 0$. 
For $R^{n}$ and other manifolds as a model, in the 
context of SDG, see Subsection \ref{modelx} below.

Neighbours of $0$ in $R$ are in SDG called first order 
{\em infinitesimals}. They have no influence on the order of $R$; it 
is a standard calculation that

\begin{prop} If $x<y$,  and $\epsilon \sim 0$, then $x+\epsilon <y$.
\end{prop}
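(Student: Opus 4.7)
The plan is to rewrite the desired inequality $x+\epsilon < y$ as $y - x - \epsilon > 0$. Setting $d := y-x$, the hypothesis $x<y$ gives $d \in R_{>0}$, and $\epsilon \sim 0$ unfolds to $\epsilon^{2}=0$. So the whole task reduces to showing that $d - \epsilon \in R_{>0}$ whenever $d > 0$ and $\epsilon^{2}=0$.

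First I would exploit nilpotency via the factorization
$$
(d-\epsilon)(d+\epsilon) \;=\; d^{2} - \epsilon^{2} \;=\; d^{2},
$$
and note that $d^{2}>0$ because $R_{>0}$ is closed under multiplication. Next I would observe that
$$
(d-\epsilon) + (d+\epsilon) \;=\; 2d \;>\; 0
$$
by closure of $R_{>0}$ under addition. Thus both the sum and the product of the pair $\{\,d-\epsilon,\,d+\epsilon\,\}$ lie in $R_{>0}$. Finally I would invoke the elementary order-theoretic fact that if $u,v \in R$ satisfy $u+v>0$ and $uv>0$, then each of $u$ and $v$ is itself in $R_{>0}$ (were $u\leq 0$, one easily derives a contradiction from the stated total strict order and the two positivity assumptions). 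Applied with $u=d-\epsilon$ and $v=d+\epsilon$, this yields $d-\epsilon > 0$, as required.

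The main obstacle I anticipate is pinning down exactly which order axioms are available: the author restricts to the "additive and order-properties of $R_{>0}$", and my argument implicitly also uses closure under multiplication (to get $d^{2}>0$). If even that is deemed outside the stated budget, a less multiplicative alternative is a direct contrapositive: assuming $\epsilon \geq d$, one would have $\epsilon > 0$ (since $d>0$), and hence $\epsilon \cdot \epsilon \geq \epsilon \cdot d > 0$, contradicting $\epsilon^{2}=0$. Either route is the kind of "standard calculation" the author alludes to, and both fit comfortably within the usual SDG framework for the number line.
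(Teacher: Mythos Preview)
The paper does not actually supply a proof: the proposition is introduced only with the phrase ``it is a standard calculation that'' and no argument follows. So there is nothing to compare your proposal \emph{against}, and the question is simply whether your argument stands on its own.

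Your main route is correct. One small sharpening: rather than arguing by contradiction from ``$u\le 0$'' (recall the paper immediately after this proposition warns that $\le$ is only a preorder here), it is cleaner to observe first that $uv=d^{2}$ is invertible, hence in a commutative ring both $u=d-\epsilon$ and $v=d+\epsilon$ are invertible; then the stated dichotomy for invertible elements gives each of them a definite sign, and the sign pattern is forced by $u+v=2d>0$ and $uv=d^{2}>0$ exactly as you say. This keeps the argument entirely within the strict order and the multiplicative/additive closure properties of $R_{>0}$ that the models section makes available.

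Your fallback ``contrapositive'' sketch is the weaker of the two and I would drop it: it leans on $\le$ and on monotonicity of multiplication by $\epsilon$, both of which are delicate in this setting (indeed the very next paragraph of the paper points out that $\epsilon\le 0$ and $\epsilon\ge 0$ simultaneously). Stick with the factorization argument.
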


\medskip

Note that if we define $x\leq y$ to mean that ``$x$ is not $>y$'', then,
for $\epsilon \sim 0$,we have that  $\epsilon$ is not $>0$, and 
similarly $\epsilon$  is not 
$<0$. So $\epsilon \leq 0$, and also $\epsilon \geq 0$. So the relation  
$\leq$ is only a preorder, not a partial order (unless $\epsilon^{2}=0 
$ implies $\epsilon =0$), and so $\leq$ cannot in general be used to 
determine elements in $R$ uniquely.

\medskip
An example of the relation $\sim$ on a manifold is equality: $x\sim 
y$ iff $x=y$. If this is the case, we say that $\sim$ is {\em 
trivial} or that $M$ is {\em discrete}. So the theory we are to develop for manifolds with 
metric have as a special case (a fragment of)
Busemann's theory. The reason  for introducing the $\sim $ 
relation is that it allows one to express, in geometric terms and 
without explicit  
differential calculus,  the 
notion of a {\em stationary} (or {\em critical}) value of a function defined on $M$. 
(This notion is of course related to the notion of {\em extremal} value of a 
function; for the present purposes, extremal value is not 
so relevant as stationary value.) 

\medskip

We recall some notions derived from a neighbour relation $\sim$ on 
$M$ (see also  \cite{SDG} I.6 and \cite{END}).
\medskip

For $z\in M$, we denote by $\M (z)$ the 
set of $z'\in M$ with $z'\sim z$, and we call it the (first order) ``monad'' around 
$z$.
A function (typically "distance from a given point $ z$") $\delta : M\to X$, defined on $M$, is said to have $z\in M$ as a {\em stationary value} if $\delta$ is constant on $\M (z)$.

\medskip

\begin{defn} Let $A$ and $B$ are subsets of $M$, and $z\in A \cap B$. We say that  $A$ {\em 
touches} $B$ at $z$, (or that $A$ and $B$ have  {\em at least first order 
contact} at $z$) if  
$$\M (z) \cap A = \M (z) \cap B.$$
 \end{defn}
Equivalently:
for all $z'\sim z$ in $M$, we 
have $z'\in A$ iff $z'\in B$.
 ``Touching at $z$'' is clearly an 
equivalence relation on the set of subsets of $M$ that contain $z$.

Note that if $A$ touches $B$ in $z$, we have $\M (z)\cap A \subseteq A\cap B$ and $\M (z)\cap B 
\subseteq A\cap B$.

\begin{defn} A subset $N\subseteq M$ will be called {\em focused} if there is a 
unique $n\in N$ so that $n'\sim n$ for all $n'\in N$. This unique $n$ 
may be called the {\em focus} of $N$.
\end{defn} 
Clearly, any singleton set is focused.
If $\sim$ is trivial (or more generally, if $\sim$ is transitive), then singleton subsets  
are the only focused subsets.
Note that $N$ being focused is a property of $N$, and the focus of 
$N$  is 
not an added structure.

Two subsets $A$ and $B$ of $M$ may touch each other in more than one point $z$. 
We are interested in the case where they touch each other in {\em exactly} one point $z$, and $\M (z)\cap A $ ($= \M (z) \cap B$)  is focused (then necessarily with $z$ as focus). We then say that $A$ and $B$ have {\em focused} touching; in this case, we call $z$  {\em the  touching point} (note the definite article), and we call $\M(z)\cap A = \M (z)\cap  B$ {\em the touching set} of $A$ and $B$. (It may be strictly smaller than $A\cap B$, see Remark \ref{redherring} below.)

In the intended application in SDG, we have, for spheres in $R^{n}$, with the standard Euclidean metric,
the following facts, which we here take as an axioms:
\begin{axiom} \label{dimx} Given spheres $A$ and $C$ in $M$, and given  $b\in A\cap 
C$. Then: $$\M (b) \cap A \subseteq \M (b)\cap C\mbox{\quad   
implies  \quad  }\M (b) \cap A = 
\M (b)\cap C.$$
\end{axiom}
This is essentially because the spheres have the same dimension. A proof of validity of this axiom in the context of
 the standard SDG axiomatics is given in Proposition \ref{fu3x} below.

\begin{axiom} \label{clex}If two  spheres $A$ and $C$  in $M$ (whose centers are apart) touch each other, then the touching is focused. \end{axiom}
 The validity of this Axiom in the intended model for the axiomatics 
is  argued in Section  \ref{modelx} below (Proposition \ref{AHx}).

\medskip
The following gives a characterization of the focus asserted in Axiom 
\ref{clex}. Let $A$ and $C$ be as in the Axiom.
\begin{prop}\label{chrrx}Assume $b\in A\cap C$, and assume that for all $b'$, we have
\begin{equation}\label{chrrxx} (b'\sim b \wedge b'\in A) \Rightarrow  
b'\in C.\end{equation} Then $b$ is the touching point of $A$ and $C$.
\end{prop}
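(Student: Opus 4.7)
The plan is to recognize that the hypothesis of the proposition is phrased precisely so as to match the antecedent of Axiom \ref{dimx}, and then let Axiom \ref{clex} do the remaining work.

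First I would rewrite the given implication in set-theoretic form: the condition that every $b'\sim b$ lying in $A$ also lies in $C$ is exactly the inclusion
\[
\M (b)\cap A \;\subseteq \; \M (b)\cap C.
\]
Since $A$ and $C$ are spheres and $b\in A\cap C$, Axiom \ref{dimx} applies and upgrades this inclusion to the equality $\M (b)\cap A = \M (b)\cap C$. By the definition of ``touches,'' this says that $A$ touches $C$ at $b$.

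Next I would invoke Axiom \ref{clex}: since $A$ and $C$ are spheres with centers apart, and we have just established that they touch each other, the touching is focused. By the definition of focused touching, there is exactly one touching point, and the common monad $\M (z)\cap A = \M (z) \cap C$ at that point is a focused subset of $M$ with the touching point as its focus. As we already know that $b$ is \emph{a} touching point, $b$ must coincide with this unique touching point; hence $b$ is \emph{the} touching point of $A$ and $C$, as asserted.

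There is no real obstacle here; the content of the proposition is simply the observation that the one-sided inclusion of monad intersections, combined with the ``same dimension'' Axiom \ref{dimx} and the focusing Axiom \ref{clex}, already forces $b$ to be the (unique, focused) touching point. The only point requiring a bit of care is verifying that the hypotheses of the two axioms are in force, namely that $A$ and $C$ are indeed spheres with apart centers (which we are given) and that $b$ lies in both (which is assumed).
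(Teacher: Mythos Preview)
Your proof is correct and follows exactly the paper's approach: rewrite the hypothesis as the inclusion $\M(b)\cap A\subseteq \M(b)\cap C$, apply Axiom~\ref{dimx} to upgrade it to equality (hence touching at $b$), and then appeal to Axiom~\ref{clex} for the uniqueness of the touching point. The paper's own proof is slightly terser, leaving the final uniqueness step implicit in the standing assumption ``Let $A$ and $C$ be as in the Axiom,'' but the argument is the same.
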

\begin{proof} The assumption (\ref{chrrxx}) gives $\M (b)\cap A 
\subseteq \M(b) \cap C$, and then Axiom \ref{dimx} gives $\M (b)\cap A 
= \M(b) \cap C$. So $A$ and $C$ touch at $b$. \end{proof}

\section{Touching of spheres}
Let $M$ be any metric 
space, with a neighbourhood relation $\sim$, as in  
\ref{MSx} and \ref{nbrx}.
For two spheres in $M$, one has two kinds of touching, external and internal.
External touching occurs when the distance between the 
centers equals the sum of the radii, and internal touching when the distance between the centers 
is the (positive) difference between the radii. In elementary Euclidean geometry, the 
differential-geometric concept of ``touching'' may, for spheres, be 
replaced by the more primitive concept of ``having precisely one 
point in common'', so classical synthetic geometry circumvents 
bringing in differential calculus for descri\-bing the touching of two {\em spheres}. 
In our context, the differential calculus is replaced by use the 
 notion of touching derived from the synthetic neighbour relation 
$\sim$, as described in Section \ref{nbrx}; it is applicable to {\em any} two subspaces of $M$.  The classical criteria for 
touching in terms of the distance between the centers of spheres then look the 
same as the classical ones, except that the  meaning of the word 
``touching'' is now the one defined using $\sim$. These criteria we take as axioms:

\begin{axiom} \label{externalx}[External touching] Let $A=S(a,r)$ and let  
$C=S(c,s)$  with $ac>r$.  Then the following conditions are equivalent:

1) $A$ and $C$ touch each other

2) $ac=r+s$.

\end{axiom}
The touching point of $A$ and $ C$ implied by  1) and Axiom \ref{clex} is 
denoted $b$ in the following picture. We shall use the notation 
$a\triangleleft_{s}c$ for $b$; $r$ need not be mentioned explicitly, it is $ac-s$.

\begin{equation}\label{bPictx}
\begin{picture}(100,50)(0,0)
\put(-2,17){$A$}
\put(30,30){\circle{40}}
\put(30,30){\circle*{2}}

\put(64,17){$C$}
\put(58,30){\circle{15}}
\put(58,30){\circle*{2}}
\put(50,30){\circle*{2}}
\put(42,26){$b$}
\put(130,30){``$b= a\triangleleft _{s}c$'' }
\end{picture}
\end{equation}

\noindent Using Proposition \ref{chrrx}, this $b$ may be characterized by 
\begin{equation}\label{charb1x}\mbox{ for all }b'\sim b:ab'=ab \Rightarrow b'c= bc
\end{equation} 
and also by
\begin{equation}\label{charb2x}
\mbox{ for all }b'\sim b:b'c =bc \Rightarrow ab'= ab.
\end{equation} 

Since $b\in A\cap C$, we have $ab=r$ and $bc=s$, and since we also 
have $ab+bc=ac$ (by 2) in the Axiom), we have the triangle equality 
$ab+bc=ac$; recall the notation   $(abc)$ for this equality.

\begin{axiom} \label{internalx}[Internal touching] Let  $A=S(a,r+s)$ and 
$B=S(b,s)$ with $ab<r+s$. Then the following  conditions are 
equivalent:

1) $A$ and $B$ touch each other 

2) $ab=r$.
\end{axiom}
(Note that  the sphere $A$ here is not the same as $A$ in 
the previous Axiom; it is bigger.)

The touching point  of $A $, $ B$ implied by 1) and Axiom \ref{clex} 
is denoted $c$ in the following picture. We shall use the notation 
$a\triangleright_{s}b$ for $c$; again $r$ need not be mentioned explicitly.

\medskip

\begin{equation}\label{cPictx}
\begin{picture}(100,50)(0,0)
\put(-5,13){$A$}
\put(40,13){$B$}
\put(30,30){\circle*{2}}
\put(30,30){\circle{55}}
\put(50,30){\circle{15}}

\put(58,30){\circle*{2}}
\put(63,26){$c$}
\put(50,30){\circle*{2}}
\put(130,30){``$c= a\triangleright _{s}b$'' }
\end{picture}
\end{equation}

\noindent Using Proposition \ref{chrrx}, this $c$ may be characterized by 
\begin{equation}\label{charc1x}\mbox{ for all }c'\sim c:ac'=ac 
\Rightarrow bc'= bc
\end{equation} 
and also by
\begin{equation}\label{charc2x}
\mbox{ for all }c'\sim c:bc' =bc \Rightarrow ac'= ac.
\end{equation}
Again,  we have the triangle equality 
$ab+bc=ac$.  

\subsection{Interpolation and extrapolation}
We shall describe how the Axioms for external and internal  touching of spheres 
give rise to an interpolation process and to an extrapolation 
process, respectively.

More precisely, given two points $a$ and $c$ (with $a\# c$) 
and given  a number $s$ with $0<s<ac$.  
 Consider the two spheres
$$S(a, ac-s) \mbox{ and } S(c,s).$$
The sum of the two radii is $ac$, so  the Axiom for external touching 
states that the touching of $S(a, ac-s) $, $ S(c,s)$ is focused. Denote the touching point  by
$a\triangleleft_{s} c$. This is the $b$ depicted in (\ref{bPictx}), 
with $r=ab$, $s=bc$.
Note that $ab+bc= ac$, or in Busemann's notation $(abc)$.

Also, given two points $a$ and $b$, and given an arbitrary  
number $s>0$. Consider the two spheres
$$S(a,ab+s) \mbox{ and } S(b,s).$$
The difference of the two radii is $ab$, so the Axiom for internal 
touching states that the touching of $S(a,ab+s)$, $ S(b,s)$ is focused. Denote the touching point by $a\triangleright _{s}b$. This is the $c$ depicted in 
(\ref{cPictx}) with $r=ab$, $s=bc$.
Note that we also here have $(abc)$.
 
\medskip
The notation $a\triangleleft_{s}c$ suggests that $a\triangleleft_{s}c$ 
is the point 
obtained by moving $s$ units from $c$ in the direction from $c$ to $a$; it is 
an interpolation, since $b$ is in between $a$ and $c$, by $(abc)$.
Likewise  $a\triangleright _{s}b$ is the point obtained 
by moving $s$ units away  from $b$ in the direction given by the 
``vector'' from $a$ to $b$; it is an extrapolation. 

\medskip

The possibility of extrapolation is a basic axiom in Busemann's 
synthetic geometry, Axiom D in \cite{2points}, Section II. Geometrically, this axiom 
says that any line segment from a point $a$ to another point $b$ 
may be  extrapolated (prolonged) beyond $b$ by the amount of 
$s$ units say, for {\em certain} $s\in R_{>0}$. The theory we present 
makes a more rude statement about extrapolation, namely that extrapolation for {\em any} 
positive amount $s$ is possible, and this implies that the spaces we consider are 
unbounded. (Busemann was also interested in bounded models for his 
axiomatics, namely e.g.\ elliptic spaces.)

\medskip

 We note that we have constructed a map $b\mapsto 
a\triangleright _{s}b$ from $S(a,r)$ to $S(a, r+s)$, (one should 
think of it as radial projection for two concentric spheres); as any map that can be 
constructed, it preserves $\sim$: if $b_{1}\sim b_{2}$ in $S(a,r)$, 
then  $a\triangleright _{s}b_{1} \sim a\triangleright _{s}b_{2}$ in 
$S(a,r+s)$.

\section{Collinearity}

The triangle equality $ab+bc=ac$, or $(abc)$, for three points  $a,b,c$, is central in
the synthetic differential geometry of Busemann, for defining 
geodesics, and in particular lines. It expresses classically a collinearity property of 
$a,b, c$. In the present version of SDG, based on the neighbour 
relation, $(abc)$ is weaker than collinearity; referring to the ``basic 
picture'' (\ref{bpx}), we do have $(ab'c)$, but $a,b',c$ will not be collinear 
in the stronger sense to be presented; but (again referring to the 
basic picture), $a,b, c$ will. 

The equivalent conditions of the following Lemma will serve as 
defintion (Definition \ref{collinx} below) of when three points $a,b,c$ 
satisfying the triangle 
equality $(abc)$ deserve the name of being collinear in our stronger 
sense:   
\begin{lemma}\label{mainx} Given three points $a,b,c$, mutually 
apart, and satisfying $(abc)$, i.e.\ satisfying the
triangle equality
$ab+bc=ac$.
Then the following six assertions are equivalent:

$$\begin{array}{llll}
\mbox{\em a1}:&&\mbox{for all }a'\sim a:&a'b=ab \Rightarrow a'c= ac\\
\mbox{\em a2}:&&\mbox{for all } a'\sim a:&  a'c= ac\Rightarrow a'b=ab\\
\mbox{\em b1}:&&\mbox{for all }b'\sim b:&ab'=ab \Rightarrow b'c= bc\\
\mbox{\em b2}:&&\mbox{for all }b'\sim b:&b'c=bc \Rightarrow ab'= ab\\
\mbox{\em c1}:&&\mbox{for all }c'\sim c:&ac'=ac\Rightarrow bc'= bc\\
\mbox{\em c2}:&&\mbox{for all }c'\sim c:&bc'=bc \Rightarrow ac'= ac\\
\end{array}$$
\end{lemma}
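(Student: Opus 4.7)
The six conditions split naturally into three pairs according to which vertex carries the varying neighbour; I would carry out the proof in three stages.

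Stage one (intra-vertex). For each $x \in \{a,b,c\}$, show $x_{1} \Leftrightarrow x_{2}$. Each condition $x_{i}$ is an inclusion of the form $\M(x)\cap S(y,d(x,y)) \subseteq \M(x)\cap S(z,d(x,z))$, or its reverse, for suitable $y,z \in \{a,b,c\}\setminus\{x\}$; both spheres pass through $x$ by construction. Axiom \ref{dimx} upgrades either one-sided inclusion to an equality, which then yields both $x_{1}$ and $x_{2}$. So it remains only to identify the three equivalence classes $\{a_{1},a_{2}\}$, $\{b_{1},b_{2}\}$, $\{c_{1},c_{2}\}$ with one another.

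Stage two (geometric rephrasing). Using Proposition \ref{chrrx} together with the triangle equality $(abc)$ and the distance hypotheses in Axioms \ref{externalx}, \ref{internalx}, I reinterpret each $x_{i}$ as a focused-touching identity. Concretely, $b_{1}$ says that $b$ is the touching point of the external contact of $S(a,ab)$ and $S(c,bc)$, i.e.\ $b = a\triangleleft_{bc} c$; $c_{1}$ says $c = a\triangleright_{bc} b$; $a_{1}$ says $a = c\triangleright_{ab} b$. The lemma is thus reduced to the mutual equivalence of these three touching-point identities.

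Stage three (inter-vertex). I prove $b_{1} \Rightarrow c_{1}$; the other implications follow by the same pattern using the $a\leftrightarrow c$ symmetry of $(abc)$ and the $\triangleleft$/$\triangleright$ duality. Given $c'\sim c$ with $ac'=ac$, set $b' := a\triangleleft_{bc} c'$. The operation $c'\mapsto a\triangleleft_{bc} c'$ is a constructed map $S(a,ac)\to S(a,ab)$, hence preserves $\sim$ by the convention of Subsection \ref{nbrx}; combined with $a\triangleleft_{bc} c = b$ (which is exactly the content of $b_{1}$ via Proposition \ref{chrrx}), this gives $b'\sim b$. By construction $ab' = ab$ and $b'c' = bc$, and a second application of $b_{1}$ yields $b'c = bc$.

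The main obstacle is the concluding step: extracting $bc' = bc$ from the configuration $b'\sim b$, $ab'=ab$, $b'c = b'c' = bc$, $c'\sim c$. My plan is to exhibit $c'$ as an element of $\M(c)\cap S(b',bc)$ (we have just shown this) and to prove that $\M(c)\cap S(b,bc) = \M(c)\cap S(b',bc)$, whence $c'\in S(b,bc)$ as required. The equality of monads I would obtain by Axiom \ref{dimx} from one inclusion, which in turn I would derive by applying the $\sim$-preserving extrapolation $b''\mapsto a\triangleright_{bc} b''$ on $\M(b)\cap S(a,ab)$ and invoking the uniqueness of the focus in Axiom \ref{clex}. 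This subtle transport — showing that ``$b$ being on the $a$-to-$c$ line'' forces ``$c$ being on the $a$-through-$b$ line'' — is where the real content of the lemma lies; everything else is either bookkeeping (Stage one) or translation between language and geometry (Stage two).
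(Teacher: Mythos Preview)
Your Stages one and two are correct and coincide with the paper's own reductions. Stage three also begins exactly as in the paper: you set $b':=a\triangleleft_{bc}c'$, note $ab'=ab$ and $b'c'=bc$, and use $b_{1}$ (in the form $b=a\triangleleft_{bc}c$) together with $\sim$-preservation to get $b'\sim b$.

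The gap is in your concluding step. You apply $b_{1}$ a second time to obtain $b'c=bc$ and then try to deduce $bc'=bc$ by showing $\M(c)\cap S(b,bc)=\M(c)\cap S(b',bc)$. That equality is not available from the axioms at this point. Axiom~\ref{clex} only concerns spheres whose centres are apart, whereas $b\sim b'$; and your proposed route via the extrapolation $b''\mapsto a\triangleright_{bc}b''$ needs, to land anywhere near $c$, the identity $a\triangleright_{bc}b=c$ --- which is precisely $c_{1}$, the statement you are proving. So the argument is circular as sketched, and I do not see an alternative way to compare $S(b,bc)$ and $S(b',bc)$ at $c$ using only what is on the table.

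The missing idea is that you should apply the touching characterisation not to $b$ but to the \emph{new} point $b'$. Since $b'=a\triangleleft_{bc}c'$ is the touching point of $S(a,ab)$ and $S(c',bc)$, Proposition~\ref{chrrx} (i.e.\ the analogue of (\ref{charb1x}) with $c'$ in place of $c$) gives
\[
\text{for all }b''\sim b':\quad ab''=ab\ \Rightarrow\ b''c'=bc.
\]
Now take $b'':=b$: you have $b\sim b'$ and $ab=ab$, hence $bc'=bc$ directly. No second application of $b_{1}$, and no comparison of $S(b,bc)$ with $S(b',bc)$, is needed. This is exactly how the paper closes the argument, and the converse $c_{1}\Rightarrow b_{1}$ is handled symmetrically via $c':=a\triangleright_{bc}b'$ and the corresponding characterisation of $c'$.
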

\begin{proof}We note that b1 and b2 are equivalent: they both express 
that $b$ is the touching point of $S(a,r)$, $ S(c,s)$ (where $r=ab$ and 
$s=bc$), as we observed  
in (\ref{charb1x}) and (\ref{charb2x}), i.e.\ they express $b=a\triangleleft_{s}c$.
 Similarly c1 and c2 are equivalent: they both express that $c$ is 
 the touching point $c$ in
(\ref{charc1x}) or (\ref{charc2x}), i.e.\  $c=a\triangleright_{t}b$.  Finally, a1  and a2 are equivalent, 
using a change of notation and the equivalence of c1 and c2.

We use b1 to prove c1. Given $c'\sim c$ with $ac'=ac$ ($ =r+s$). Let $b'$ be 
$a\triangleleft_{s}c'$, so $ab'=r$. And $b'\sim b$,  since $a\triangleleft_{s}$ 
preserves $\sim$; furthermore, $b'$ is characterized by
$$\mbox{b'1: for all } b''\sim b' \mbox{  we have }ab'' =r \mbox{ implies }b''c'=s.$$

 \noindent Then since $b\sim b'$ and $ab=r$, we  use b'1 with $b''=b$ to 
conclude $bc'=s$.

Similarly, we use c1 to prove b1: Given $b'\sim b$ with $ab'=r$. Let 
$c'$ be $a\triangleright _{s}b'$, so $ac'=r+s$. And $c'\sim c$ since $a\triangleright _{s}$ 
preserves $\sim$; furthermore  $c' $ is characterized by
$$\mbox{c'1:  for all }c''\sim c'\mbox{ we have }ac'' = r+s \mbox{ implies }b'c''=s .$$


\noindent Then since $c\sim c'$ and $ac=r+s$,  we use c'1  with $c''=c$ to conclude $b'c=s$.

The remaining implications are proved by the same method.\end{proof}

\begin{defn} \label{collinx} Given three points $a$, $b$, and $c$, mutually apart. 
Then we say that $a,b,c$ are {\em collinear (with $b$  in between $a$ 
and $c$)},  and we write $[abc]$, if $(abc)$ holds, and 
one of the six equivalent conditions of  Lemma \ref{mainx} holds. 
 \end{defn}
Note that for $bc=s$, the assertion $[abc]$ is equivalent to $b= a\triangleleft_{s}c$, 
and also 
to $c=a\triangleright _{s}b$. Thus
\begin{prop} \label{abcx}\label{reciprocityx}Given $a,b$, and $s$. Then point $c=a\triangleright _{s}b$ is characterized by
$bc=s$ and the collinearity condition $[abc]$. Also, given $a,c$, and 
$s$ with $s<ac$; then the point 
$b=a\triangleleft _{s} c$ is characterized by the same two conditions. 
In particular, 
$b= a \triangleleft _{s} 
(a\triangleright _{s}b)$, 
and, for $s<ac$, $c=a\triangleright _{s}(a \triangleleft _{s} c)$.
\end{prop}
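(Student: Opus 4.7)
The plan is to unpack the two definitions $a\triangleright_s b$ and $a\triangleleft_s c$ from the previous subsection and match them against the characterizations assembled in Lemma \ref{mainx}. Essentially, every ingredient is already in place; the work is just to line up which characterization corresponds to which touching point.

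First, I would treat the extrapolation case. By definition $c := a\triangleright_s b$ is the focused touching point of the two spheres $S(a,ab+s)$ and $S(b,s)$ (Axiom \ref{internalx} plus Axiom \ref{clex}). Membership in the intersection gives immediately $ac = ab+s$ and $bc = s$, and the first of these is the triangle equality $(abc)$. The touching condition itself is captured, via Proposition \ref{chrrx}, by (\ref{charc1x}), which is precisely condition c1 of Lemma \ref{mainx}. Since $(abc)$ already holds, Lemma \ref{mainx} tells us c1 is equivalent to $[abc]$. Hence $c = a\triangleright_s b$ is equivalent to the conjunction $bc = s$ and $[abc]$, which is the first claim.

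Second, I would give the parallel argument for $b = a\triangleleft_s c$: here one uses external touching (Axiom \ref{externalx}) of $S(a,ac-s)$ and $S(c,s)$, with (\ref{charb1x}) playing the role of the touching criterion, which is condition b1 of Lemma \ref{mainx} and thus equivalent to $[abc]$ given $(abc)$. The hypothesis $s < ac$ is exactly what makes $ac - s$ a legitimate positive radius, so the construction is available.

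Finally, the two identities follow by composition. Given $a,b,s$, set $c := a\triangleright_s b$; by the first characterization, $bc = s$ and $[abc]$. Since $(abc)$ implies $bc < ac$ (recorded right after the definition of $(abc)$), we have $s < ac$, so $a\triangleleft_s c$ is defined; by the second characterization the unique point satisfying $bc = s$ and $[abc]$ is $a\triangleleft_s c$, and $b$ itself satisfies these, giving $b = a\triangleleft_s(a\triangleright_s b)$. The dual identity, for $s < ac$, is symmetric.

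There is no real obstacle, since the substance of the result is already present in Lemma \ref{mainx} and Proposition \ref{chrrx}; the only minor point to verify explicitly is the definedness inequality $s < ac$ used to invoke $a\triangleleft_s c$, which comes for free from $(abc)$.
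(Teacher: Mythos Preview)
Your proposal is correct and follows essentially the same route as the paper. The paper's justification is extremely terse: it simply notes (in the sentence preceding the Proposition and in the proof of Lemma~\ref{mainx}) that conditions b1/b2 express $b=a\triangleleft_{s}c$ and c1/c2 express $c=a\triangleright_{s}b$, and then states the Proposition without further proof. You have unpacked exactly this reasoning, and your explicit check that $s<ac$ (needed to form $a\triangleleft_{s}c$ in the first identity) follows from $(abc)$ is a detail the paper leaves implicit.
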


Geometrically, the last assertion in the  Proposition just describes the   
 bijection between the 
two concentric circles $S(a,r)$ and $S(a,r+s)$ which one obtains by radial 
projection from their common center $a$.

Sometimes, we shall write $[abc]_{1}$ 
to mean that $[abc]$ holds by virtue of 
a1 or a2, and $[abc]_{2}$ if it holds by virtue of b1 or b2, and 
$[abc]_{3}$ if it holds by virtue of c1 or c2, respectively.  
We clearly have
\begin{equation}\label{1xx} [abc]_{1}\mbox{ \quad iff \quad } 
S(b,ab)\mbox{ touches } S(c,ac)\mbox{ in }a,
\end{equation}
\begin{equation}\label{2xx} [abc]_{2}\mbox{ \quad iff \quad } 
S(a,ab)\mbox{ touches } S(c,bc)\mbox{ in }b,
\end{equation}
\begin{equation}\label{3xx} [abc]_{3}\mbox{ \quad iff \quad } 
S(a,ac)\mbox{ touches } S(b, bc)\mbox{ in }c.
\end{equation}

\noindent Collinearity is  ``associative'', in the following sense. 
Given a list of four points $a,b,c,d$, mutually apart. Consider the 
following  four 
collinearity assertions: $$[abc], [abd], [acd], [bcd].$$

\begin{prop} \label{transx}If two of these collinearity assertions hold, then they all 
four do.
\end{prop}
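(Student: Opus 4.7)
The plan is to use the symmetry $(a,b,c,d)\mapsto(d,c,b,a)$, which permutes the four assertions amongst themselves (swapping $[abc]\leftrightarrow[bcd]$ and $[abd]\leftrightarrow[acd]$), to reduce the six pairs of hypotheses to a few essentially distinct cases; in each I combine the chained characterizations of Lemma \ref{mainx} with the touching-sphere axioms. I will spell out the archetypal case $\{[abc],[abd]\}$ in detail; the others are variants of the same pattern.

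Assume $[abc]$ and $[abd]$. For the collinearity conditions of $[acd]$ and $[bcd]$, I chain the six characterizations of Lemma \ref{mainx}. Given $a'\sim a$ with $a'c=ac$, the condition a2 of $[abc]$ yields $a'b=ab$, and then the condition a1 of $[abd]$ yields $a'd=ad$; this is precisely the condition a1 for $[acd]$. The analogous chain at $b$, namely $b'c=bc \Rightarrow ab'=ab \Rightarrow b'd=bd$ using the b2 of $[abc]$ and the b1 of $[abd]$, gives the condition a1 of $[bcd]$ (in the labeling of Lemma \ref{mainx} applied to the triple $(b,c,d)$, where $b$ plays the role of ``$a$'').

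Both triangle equalities $(acd)$ and $(bcd)$ reduce, by subtracting $(abc)$ from $(abd)$, to the single identity $cd = |bd - bc|$. To establish it, I combine the b2 characterizations of $[abc]$ and $[abd]$ via equation (\ref{2xx}): $\M(b) \cap S(c,bc) = \M(b) \cap S(a,ab) = \M(b) \cap S(d,bd)$, so $S(c,bc)$ and $S(d,bd)$ touch at $b$, focused by Axiom \ref{clex}. Then Axioms \ref{externalx} and \ref{internalx} force $cd = bc+bd$ (external) or $cd = |bd-bc|$ (internal).

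The main obstacle is excluding the external branch. If $cd = bc+bd$ held, equation (\ref{2xx}) would yield $[cbd]$, so $d = c\triangleright_{bd} b$ by Proposition \ref{abcx}; combined with $d = a\triangleright_{bd} b$ from $[abd]$, the common value $p := d$ satisfies $ap = ab+bd$, $cp = bc+bd$, and both $[abp]$ and $[cbp]$ hold. Applying the c1 characterizations of both (with Axiom \ref{dimx}) yields $\M(p) \cap S(a,ap) = \M(p) \cap S(b,bp) = \M(p) \cap S(c,cp)$, so $S(a,ap)$ and $S(c,cp)$ touch at $p$, focused. Axioms \ref{externalx} and \ref{internalx} then force $ac = ap + cp = ab + bc + 2bd$ or $ac = |ap-cp| = |ab - bc|$; both contradict $ac = ab + bc$ (from $[abc]$), since $bd > 0$ and $ab, bc > 0$. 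Hence the external case is impossible, $cd = |bd - bc|$, and $[acd]$, $[bcd]$ follow. The remaining five hypothesis pairs are handled by analogous chains of Lemma \ref{mainx}'s characterizations together with this same two-step (touching plus exclusion-via-uniqueness of extrapolation) argument.
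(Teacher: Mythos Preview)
Your archetypal pair $\{[abc],[abd]\}$ is precisely one for which the proposition, read literally, fails. On a line take the points in the order $a,b,d,c$ (say $0,1,2,3$ in $R$): then $[abc]$ and $[abd]$ both hold, yet $(acd)$ reads $3+1=2$, so $[acd]$ is false. The two hypotheses $[abc]$ and $[abd]$ fix the position of $b$ (between $a$ and each of $c,d$) but say nothing about the relative order of $c$ and $d$; hence the specific ordered conclusions $[acd]$ and $[bcd]$ cannot follow from them alone.

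Your argument breaks exactly at this point. You write that $(acd)$ and $(bcd)$ ``reduce \ldots\ to the single identity $cd=|bd-bc|$'', but subtracting $(abc)$ from $(abd)$ gives $ad-ac=bd-bc$, and $(acd)$ says $cd=ad-ac$; so what is actually needed is the \emph{signed} identity $cd=bd-bc$, together with $bd>bc$. After your (correct) exclusion of the external branch you are left with the internal-touching conclusion $cd=|bd-bc|$, which still permits $cd=bc-bd$ --- realised by the counterexample above --- and your argument never separates the two signs. It cannot, because from $\{[abc],[abd]\}$ alone both are possible.

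The paper instead treats the pair $\{[abc],[acd]\}$. Here $(abc)$ and $(acd)$ give $ad=ab+bc+cd$, so the ordering of the four points is pinned down and no exclusion step is needed. The point $c$ appears in both hypotheses in compatible positions (last in $[abc]$, middle in $[acd]$), and the paper simply chains at $c$: for $c'\sim c$ with $bc'=bc$, $[abc]_{3}$ gives $ac'=ac$, and then $[acd]_{2}$ gives $c'd=cd$; this is the b1-condition for the triple $(b,c,d)$. Your chaining idea is the same as the paper's; the essential difference is the choice of pair. If you want to salvage your write-up, redo it with one of the pairs that determines the order --- $\{[abc],[acd]\}$, $\{[abc],[bcd]\}$, or $\{[abd],[bcd]\}$ --- and the exclusion machinery becomes unnecessary.
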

\begin{proof}  The proofs of the various cases are similar, so we give 
just one of them: we  prove that $[abc]$ and $[acd]$ imply $[bcd]$. The 
point $c$ occurs in all three of these assertions, and we concentrate on that 
point: we  use $[abc]_{3}$ and $[acd]_{2}$ to prove $[bcd]_{2}$. So 
assume that  $c'\sim c$ with $bc'=bc$. By $[abc]_{3}$, we  therefore 
have   
$ac'=ac$. By $[acd]_{2}$ we therefore have the desired $c'd=cd$. This proves 
$[bcd]_{2}$.\end{proof}

Note that since $ab=ba$ etc., the assertion $[cba]_{1}$ is 
the same as  $[abc]_{3}$. From  this
we conclude that
collinearity is 
symmetric: $[abc]$ iff $[cba]$. 
We  say that three points $a,b$, and $c$ are {\em aligned} if some permutation 
of them are collinear (so for the term ``alignment'', we ignore which point is in the 
middle).

\begin{prop}\label{37x}Assume that $[a'ab]$. Then $a'\triangleright _{s}b= 
a\triangleright _{s}b$.
\end{prop}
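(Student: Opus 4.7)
The plan is to use Proposition \ref{abcx} (which characterizes the extrapolation point by a distance condition plus a collinearity condition) together with the associativity of collinearity in Proposition \ref{transx}.

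More specifically, set $c := a\triangleright_s b$. By Proposition \ref{abcx}, this point $c$ is characterized by the two conditions $bc = s$ and $[abc]$. I want to show that the very same $c$ is also equal to $a'\triangleright_s b$, so I will verify that $c$ satisfies the corresponding two characterizing conditions for $a'\triangleright_s b$, namely $bc = s$ and $[a'bc]$. The distance condition $bc = s$ is already in hand, so the only thing left to produce is the collinearity $[a'bc]$.

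To get $[a'bc]$, I apply Proposition \ref{transx} to the four-point list $a', a, b, c$. Among the four associated collinearity assertions $[a'ab], [a'ac], [a'bc], [abc]$, I already have two of them available: $[a'ab]$ is the hypothesis, and $[abc]$ holds by construction of $c = a\triangleright_s b$. Proposition \ref{transx} then yields all four, in particular $[a'bc]$. Combined with $bc = s$, the characterization in Proposition \ref{abcx} gives $c = a'\triangleright_s b$, hence $a'\triangleright_s b = a\triangleright_s b$, as desired.

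I do not anticipate a serious obstacle here: the main content of the statement is already packaged in Proposition \ref{transx}, and the rest is a clean application of the characterization of $\triangleright_s$. A minor point of care is to make sure that the four points $a', a, b, c$ are mutually apart so that Proposition \ref{transx} applies; this follows from $a'\# a$ and $a\# b$ (implicit in $[a'ab]$) together with $b\# c$ (implicit in $bc = s > 0$) and $a\#c$ (from $[abc]$), and these apartnesses propagate to $a'\#c$ via the compatibility of $\#$ with the triangle equalities $(a'ab)$ and $(abc)$.
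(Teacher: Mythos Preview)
Your argument is correct and is essentially the paper's own proof, run in the opposite direction: the paper sets $c:=a'\triangleright_{s}b$, uses $[a'ab]$ and $[a'bc]$ with Proposition~\ref{transx} to get $[abc]$, and then invokes Proposition~\ref{reciprocityx} to identify $c$ with $a\triangleright_{s}b$; you instead start from $c:=a\triangleright_{s}b$ and deduce $[a'bc]$. Your added remark about mutual apartness of the four points is a point the paper passes over silently.
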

\begin{proof}  By construction, $a'\triangleright _{s}b$ is aligned 
with $a'$ and $b$,  and $a',a, b$ are aligned by assumption. So we 
have two of the four possible alignment assertions 
for $a',a,b$, and $a'\triangleright _{s}b$.  
From associativity of collinearity 
(Proposition \ref{transx}) we conclude that $a,b$, and $a'\triangleright 
_{s}b$ are aligned (with $b$ in the middle); and $a'\triangleright 
_{s}b$    
has distance $s$ to $b$. These two properties characterize 
$a\triangleright _{s}b$ by Proposition \ref{reciprocityx}.\end{proof}

 The characterization of $a\triangleright_{s}b$ (respectively of 
$a\triangleleft _{s} c$)  
implies 
\begin{prop} \label{22sphx}If two spheres touch another,  then their 
centers are aligned with their touching point.\end{prop}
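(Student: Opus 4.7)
The plan is to use Axioms~\ref{externalx} and \ref{internalx} to divide the hypothesis ``two spheres touch'' into the two canonical cases (external and internal), and in each case read off collinearity by matching the characterization of the touching point against one of the six equivalent conditions of Lemma~\ref{mainx}.

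First I would take the two touching spheres and consider the external case, with the spheres written as $A = S(a,r)$ and $C = S(c,s)$ where $ac > r$. By Axiom~\ref{externalx}, $ac = r+s$, so the touching point, call it $b$, lies on both spheres and satisfies $ab = r$, $bc = s$, and $ab + bc = ac$, i.e.\ $(abc)$ holds. Moreover, by Axiom~\ref{clex} together with the characterization (\ref{charb1x}) (which is nothing other than condition b1 of Lemma~\ref{mainx}), $b$ satisfies that condition. Therefore $[abc]$ holds by definition, so the two centers $a,c$ and the touching point $b$ are collinear.

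Next I would do the internal case, where the spheres are $A = S(a,r+s)$ and $B = S(b,s)$ with $ab < r+s$. Axiom~\ref{internalx} forces $ab = r$, and the touching point $c$ lies on both, so $ac = r+s$, $bc = s$, and $(abc)$ again holds. The characterization (\ref{charc1x}) of $c$ is exactly condition c1 of Lemma~\ref{mainx}, so $[abc]$ follows just as before, giving alignment of the centers $a,b$ with the touching point $c$.

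There is essentially no obstacle: all the real work was already carried out in the proof of Lemma~\ref{mainx} (equivalence of the six collinearity conditions) and in Proposition~\ref{chrrx} together with Axiom~\ref{clex} (which supplies the characterizations (\ref{charb1x})--(\ref{charc2x}) of the touching point). The only mild subtlety is to keep the labels of centers and touching point consistent across the two cases, since the ``center'' of one sphere in the internal case is the ``touching point'' of the analogous configuration for external touching of a different pair of spheres.
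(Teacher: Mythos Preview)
Your proposal is correct and follows essentially the same route as the paper. The paper compresses the argument into a single sentence, noting that the proposition is an immediate consequence of the characterization of $a\triangleright_{s}b$ (respectively $a\triangleleft_{s}c$) in Proposition~\ref{reciprocityx}; your write-up simply unpacks this into the external and internal cases and points explicitly to conditions b1 and c1 of Lemma~\ref{mainx}, which is exactly what that characterization amounts to.
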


\subsection{Stiffness} A major aim in Busemann's work, and also in the present note,  is to construct a notion 
of {\em  line} in terms of distance. Basic here is the notion of when 
three $a,b,c$ points are {\em collinear}; classically, this is 
the statement $(abc)$, i.e.\ $ab+bc=ac$; this is, in practical terms, 
to define lines  in terms of taut strings\footnote{the word ``line'' in geometry is derived from 
 ``line'' (thread  made of linen) in textiles.}. This gives you 
something which is only rigid ``longitudinally'', but not 
``transversally'', whereas our stronger notion, defined using $[abc]$, 
further involves transversal rigidity: if $[abc]$ holds, then the 
infinitesimal  
transversal variation given by replacing $b$ by $b'$, as in the basic 
picture (\ref{bpx}), still satisfies $(ab'c)$, 
(whereas   $[ab'c]$ fails).

In practical terms, $(abc)$ refers to lines given by a taut string, 
whereas $[abc]$ refers to lines given by  a {\em ruler} (or {\em 
straightedge}). The 
transversal rigidity, usually called its {\em stiffness}, of a ruler, is achieved by the {\em width} of the 
ruler; the stiffness makes the ruler better adapted than strings for 
{\em drawing} lines, when producing technical drawings on paper.

The stiffness of $[abc]$ is obtained by a a qualitative 
(infinitesimal) kind of 
width, given by the neighbour relation.

\section{Huygens' Theorem for spheres}\label{HTX}

Let $T$ be a manifold, and let $S_{t}$, for $t\in T$, be a family of 
submanifolds of a manifold $M$.

{\em An
envelope}  (note the indefinite article ``an'' ) for the family 
$S_{t}$ ($t\in T$)  is {\em a} manifold $E\subseteq M$ 
 such that every $S_{t}$ touches $E$, and every point in $E$ is 
touched by a unique $S_{t}$. 
(Here, we  used the impredicative, or implicit, definition of 
the notion of envelope.
See \cite{END} for a comparison with more explicit definition, 
equivalent to the ``discriminant'' method, which provides {\em the} 
(maximal) envelope, as the union of the "characteristics").

\begin{thm}\label{Huyx} [Huygens] An envelope $E$ of the $S(b,s)$, as $b$ ranges 
over $S(a,r)$, is  $S(a,r+s)$. For $b\in S(a,r)$, $E$ touches 
$S(b,s)$ in $a\triangleright _{s}b$.
\end{thm}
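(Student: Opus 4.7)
\medskip

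\noindent\textbf{Proof plan.} The goal is to verify the two clauses in the definition of envelope: (i) for every $b\in S(a,r)$ the sphere $S(b,s)$ touches $E:=S(a,r+s)$, and (ii) every point of $E$ is touched by exactly one $S(b,s)$ with $b\in S(a,r)$. I expect the first clause to follow directly from the internal-touching axiom, and the second to follow from the characterization of $a\triangleleft_s c$ together with Proposition \ref{22sphx} (alignment of centers with the touching point).

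\medskip

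\noindent\textbf{Clause (i).} Fix $b\in S(a,r)$, so $ab=r$. Look at the two spheres $A=S(a,r+s)$ and $B=S(b,s)$. Their radii differ by $(r+s)-s=r=ab<r+s$, so Axiom \ref{internalx} applies and gives that $A$ and $B$ touch. The touching point is, by the very definition of the extrapolation operation, $c=a\triangleright_s b$; this already establishes the second sentence of the theorem.

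\medskip

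\noindent\textbf{Clause (ii).} Given $c\in S(a,r+s)$ (so $ac=r+s>s$), I would apply the interpolation construction to produce the required $b$: set $b:=a\triangleleft_s c$. Then $ab=ac-s=r$, so $b\in S(a,r)$, and one has $bc=s$ together with $[abc]$ (Proposition \ref{reciprocityx}). By clause (i) the sphere $S(b,s)$ touches $E$, and by Proposition \ref{abcx} the touching point is $a\triangleright_s b = c$.

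\medskip

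\noindent\textbf{Uniqueness.} This is the step I expect to need the most care. Suppose $b^{*}\in S(a,r)$ is another point with $S(b^{*},s)$ touching $E$ at $c$. Since the two spheres $S(b^{*},s)$ and $S(a,r+s)$ have distinct centers (their distance is $r>0$) and touch, Proposition \ref{22sphx} says their centers $a$ and $b^{*}$ are aligned with the touching point $c$; together with $b^{*}c=s$ (from $c\in S(b^{*},s)$) and $ab^{*}=r$, Proposition \ref{reciprocityx} characterizes $b^{*}$ uniquely as $a\triangleleft_s c$. Hence $b^{*}=b$, completing the proof. The only subtlety is that Proposition \ref{22sphx} must be invoked with the correct ``internal'' configuration; since $ab^{*}=r=(r+s)-s$ is exactly the difference of radii, the configuration is that of Axiom \ref{internalx}, and the alignment argument goes through unchanged.
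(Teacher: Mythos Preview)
Your proof is correct and follows essentially the same route as the paper: the internal-touching axiom gives clause (i) with touching point $a\triangleright_s b$, and for clause (ii) one interpolates $b:=a\triangleleft_s c$ and uses the reciprocity $a\triangleright_s(a\triangleleft_s c)=c$ from Proposition~\ref{reciprocityx}. The only cosmetic difference is in the uniqueness step: the paper simply invokes that radial projection $b\mapsto a\triangleright_s b$ is a bijection $S(a,r)\to S(a,r+s)$ (Proposition~\ref{reciprocityx}), whereas you unpack this via Proposition~\ref{22sphx} and the characterization of $a\triangleleft_s c$; both arguments amount to the same thing.
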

\begin{proof} 
For $b\in S(a,r)$, $S(b,s)$ touches $S(a,r+s)$ in $a\triangleright 
_{s}b$. Conversely,
let $c\in S(a,r+s)$;  we 
take  $b:= a\triangleleft_{s}c$. The point $b$ is then in $ S(a,r)$, by 
construction. So $S(b,s)$ touches $S(a,r+s)$ in $a\triangleright 
_{s}b$, but since $b=a\triangleleft_{s}c$, this is $a \triangleright 
_{s}(a\triangleleft _{s}c)$, which is $c$, by Proposition 
\ref{reciprocityx}.

The uniqueness of $b$ follows from the fact that radial projection is 
a bijection $S(a,r)\to S(a,r+s)$.\end{proof}

\section{The ray given by two points}\label{ray2x}
The notion of ray to be given now is  closely related to
 what \cite{2points} calls a geodesic, except that a geodesic in $M$ is 
(represented by)  a map $R \to M$, whereas a ray is a map $R_{>0}\to 
M$, so is 
only a ``half geodesic'';  and, furthermore, a ray has, unlike a geodesic, a 
definite 
source or starting point.

\begin{prop} \label{asssx}Let $a$ and $b$ in $M$, with $ab=r$, say. Then for any $s,t \in R_{>0}$, we 
have
$$a\triangleright _{t} (a\triangleright _{s}b)= a\triangleright 
_{s+t}b.$$
\end{prop}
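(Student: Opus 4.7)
The plan is to prove the associativity identity by unpacking both sides as characterizations via collinearity and distance, then using Proposition \ref{transx} (associativity of collinearity) and Proposition \ref{reciprocityx} (characterization of $a\triangleright_s b$).

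First I would set $c := a\triangleright_s b$ and $d := a\triangleright_t c$, and aim to show that $d = a\triangleright_{s+t} b$. By Proposition \ref{reciprocityx}, $a\triangleright_{s+t} b$ is the unique point characterized by the two conditions $bd = s+t$ and $[abd]$. So it suffices to verify these two properties for my $d$. From the definition of $c = a\triangleright_s b$, I get $bc = s$ together with the collinearity $[abc]$; from $d = a\triangleright_t c$, I get $cd = t$ together with $[acd]$.

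Next I would apply Proposition \ref{transx} to the four-point list $a,b,c,d$: two of the four collinearity assertions, namely $[abc]$ and $[acd]$, are already in hand, so the Proposition delivers the other two, in particular $[abd]$ and $[bcd]$. The assertion $[bcd]$ in turn implies $(bcd)$, that is $bd = bc + cd = s + t$. Together with $[abd]$, this gives exactly the two conditions characterizing $a\triangleright_{s+t} b$, and uniqueness yields $d = a\triangleright_{s+t} b$, which is what we wanted.

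I do not expect a serious obstacle: the whole argument is a direct bookkeeping of the reciprocity/characterization results already established. The only subtlety worth noting is that to invoke Proposition \ref{transx} one needs the four points $a,b,c,d$ to be mutually apart; this should be transparent in the intended interpretation since $bc = s > 0$, $cd = t > 0$ and $ab = r > 0$, and the alignment forces all pairwise distances to be positive sums of these, but it is worth an explicit remark in the final writeup.
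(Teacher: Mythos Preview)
Your proof is correct and follows essentially the same route as the paper: set $c:=a\triangleright_s b$, $d:=a\triangleright_t c$, use Proposition~\ref{transx} on $[abc]$ and $[acd]$ to obtain $[abd]$ and $[bcd]$, deduce $bd=s+t$ from $(bcd)$, and conclude via the characterization in Proposition~\ref{reciprocityx}. Your explicit remark about mutual apartness is a sensible addition that the paper leaves implicit.
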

\begin{proof}  Let for brevity $c:= a\triangleright _{s}b$ and $d:= 
a\triangleright _{t}c$. Then $[abc]$ and $[acd]$, hence by Proposition \ref{transx}, we also have 
$[abd]$ and $[bcd]$, Also, by construction, $bc=s$ and $cd=t$. By $[bcd]$ we have $bd=s+t$, and by $[abd]$, $d$ is aligned with $a,b$. These two properties characterize  $a\triangleright_{s+t}b$.
\end{proof}


We call the map $R_{>0}\to M$ given by $s\mapsto a\triangleright 
_{s}b$ the  {\em ray generated by $a$ and $b$}, and we call $b$ its 
{\em source } of the ray. (Note that we 
cannot say $a\triangleright_{0}b =b$, 
since $a\triangleright _{s}b$ only is defined for $s>0$. However, it is easy to "patch" rays, using Proposition \ref{transx}.)

\begin{prop} Any ray $R_{>0}\to M$ is  an isometry i.e.\ is distance preserving. 
Furthermore, any triple of mutually apart points on a ray are 
aligned.
\end{prop}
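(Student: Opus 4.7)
The plan is to derive both assertions from the associativity of extrapolation already established in Proposition \ref{asssx}, together with the associativity of collinearity in Proposition \ref{transx}. Denote the ray by $\gamma(s) := a \triangleright_{s} b$, so by construction $b\,\gamma(s) = s$ and $[a\,b\,\gamma(s)]$ hold for every $s\in R_{>0}$.

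For the isometry claim, I would fix $s,t\in R_{>0}$ with $s<t$ and compute $\gamma(s)\,\gamma(t)$ by re-expressing $\gamma(t)$ as an extrapolation starting from $\gamma(s)$ rather than from $b$. Concretely, Proposition \ref{asssx} gives
$$a\triangleright_{t-s}\bigl(a\triangleright_{s}b\bigr) = a\triangleright_{(t-s)+s}b = a\triangleright_{t}b,$$
i.e.\ $\gamma(t) = a\triangleright_{t-s}\gamma(s)$. By the characterization of $a\triangleright_{t-s}$ in Proposition \ref{reciprocityx}, this means precisely that $\gamma(s)\,\gamma(t) = t-s$ and that $[a\,\gamma(s)\,\gamma(t)]$. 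Hence the distance between $\gamma(s)$ and $\gamma(t)$ equals $|t-s|$, which is the desired isometry property. (The case $s=t$ is trivial; the case $s>t$ is symmetric.)

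For the second assertion, suppose $c_{1} = \gamma(s_{1})$, $c_{2} = \gamma(s_{2})$, $c_{3} = \gamma(s_{3})$ are mutually apart points on the ray, and, relabelling if necessary, assume $s_{1}<s_{2}<s_{3}$. The computation just made, applied to the pairs $(s_{1},s_{2})$ and $(s_{1},s_{3})$, yields both $[a\,c_{1}\,c_{2}]$ and $[a\,c_{1}\,c_{3}]$. These are two of the four collinearity assertions associated with the quadruple $a,c_{1},c_{2},c_{3}$ (which is mutually apart, since $a\#c_{i}$ follows from $[a\,b\,c_{i}]$ and the $c_{i}$ are mutually apart by hypothesis). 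Proposition \ref{transx} then forces all four to hold; in particular $[c_{1}\,c_{2}\,c_{3}]$, so $c_{1},c_{2},c_{3}$ are aligned.

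The only point that requires care, and what I would flag as the main obstacle, is the bookkeeping of apartness: both Proposition \ref{reciprocityx} and Proposition \ref{transx} presuppose that the relevant triples or quadruples are \emph{mutually} apart. Distance preservation together with $t-s>0$ supplies $\gamma(s)\#\gamma(t)$, and the collinearity $[a\,b\,\gamma(s)]$ supplies $a\#\gamma(s)$, so nothing is lost; but it is the one place where the argument must be checked rather than written down mechanically.
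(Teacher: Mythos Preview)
Your proof is correct and follows essentially the same route as the paper: the isometry comes from Proposition~\ref{asssx} (together with the characterization in Proposition~\ref{reciprocityx}, which the paper leaves implicit), and alignment from Proposition~\ref{transx}. The only cosmetic difference is that the paper starts alignment from the three base collinearities $[a\,b\,c_i]$ and iterates Proposition~\ref{transx}, whereas you reuse the collinearities $[a\,c_1\,c_2]$ and $[a\,c_1\,c_3]$ already obtained in the isometry step and apply Proposition~\ref{transx} once; your handling of the apartness bookkeeping is also a welcome elaboration of what the paper glosses over.
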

\begin{proof} The first assertion is an immediate consequence of 
Proposition \ref{asssx}; the second follows from the collinearity of 
$a,b$, and  $a\triangleright_{s}b$ by  iterated use of use of Proposition 
\ref{transx}.\end{proof}

Thus, a ray with source $b$ 
 can be viewed as a 
``parametrization of its image by arc length, measured from $b$'' 
(except that we have not attempted to define these terms here).  
Note that $b$ itself is not in the image of the ray.

The following Example refers to the model of the 
axiomatics which one obtains from SDG, as in Section \ref{modelx}. It shows that the isometry property 
is not sufficient for being a ray:

\medskip

\noindent {\bf Example.}
Consider in $R^{2}$ the points $a=(-1,0), b=(0,0)$, and consider the ray $s\mapsto 
a\triangleright _{s}b$; it is, of course, the positive $x$-axis, 
i.e.\ the map $s\mapsto (s,0)$. But for any $\epsilon$ with 
$\epsilon^{2}=0$,  the map   given by $s\mapsto (s,\epsilon\cdot 
s^{2})$ has the isometry property expressed by $(xyz)$ for any three 
values corresponding to 
$s_{1} <s_{2}<s_{3}$;  
 but the map 
is not a ray, since we 
cannot conclude $[xyz]$ unless $\epsilon 
=0$.

\section{Contact elements}

The notion of contact element is trivial if $\sim$ is discrete, 
for then contact elements are just one-point sets. A one 
point set does not generate a ``ray orthogonal to it'', as the 
contact elements, which  we are to consider, do.
 
\begin{defn} A {\em contact element at $b\in M$} is a  subset $P$ 
of  $M$ which may be written in
the form $\M (b)\cap A$, for some sphere $A$ with  $b\in A$.
\end{defn}
Let $A,b$ and $P$ be as in the definition. We then say that $A$ {\em 
touches $P$ at $a$}. If $C$ is a sphere touching $A$ at 
$b$, we have $$\M (b) \cap C= \M (b)\cap A = P.$$
 Note that $P$ is 
focused set, with focus $a$.

For any sphere $A$ and any $b\in A$, there exists (many) 
spheres $C$ touching $A$ at $b$. (This follows by applying  extrpolation and interpolation). We therefore may equivalently 
describe a contact element at $b$ as the touching set of two spheres, touching another at $b$.

In Subsection \ref{rayQx}, we will  refine the notion into that of a {\em 
transversally oriented} contact element.

\medskip

In the intended applications, the contact elements in $M$ make up the 
total space of the
projectivized cotangent bundle of $M$.

   \begin{defn} \label{Orthogx}
Given a contact element $P$ at $b\in M$, and given $c\# b$. We 
say that {\em $c$ is orthogonal to $P$}, written $c\perp P$, if for all $b'\in P$, $b'c=bc$.
\end{defn}
(Thus, if $\sim$ is trivial, then {\em all} points $c\# b$ are orthogonal to 
$P$.) We clearly have
$c\perp P$ iff $P\subseteq 
S(c,s)$ (where $s$ denotes $bc$).

\begin{prop} \label{perpxx} Assume that $[abc]$ holds. Let $P$ be a contact 
element at $a$. Then
$b\perp P$ implies $c\perp P$ (and vice versa).\end{prop}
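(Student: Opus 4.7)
The plan is to unpack the definitions carefully and then apply the conditions labelled a1 and a2 in Lemma \ref{mainx}, which together form an equivalence characterising $[abc]$ through variations at the point $a$.

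First, I would unpack what it means for $P$ to be a contact element at $a$: we have $P = \M(a) \cap A$ for some sphere $A$ through $a$, so in particular every $a' \in P$ satisfies $a' \sim a$. Next, I would rephrase the hypothesis $b \perp P$ using Definition \ref{Orthogx}: it says that for every $a' \in P$, we have $a'b = ab$. The goal $c \perp P$ similarly means that for every $a' \in P$, we have $a'c = ac$.

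Now fix any $a' \in P$. Then $a' \sim a$ by the contact element structure, and $a'b = ab$ by the orthogonality hypothesis $b \perp P$. Since $[abc]$ holds, we may apply clause a1 of Lemma \ref{mainx}, which states that for every $a' \sim a$, $a'b = ab$ implies $a'c = ac$. This gives $a'c = ac$ for every $a' \in P$, and hence $c \perp P$.

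For the converse, the argument is entirely symmetric: starting from $c \perp P$ and $a' \in P$, one gets $a' \sim a$ and $a'c = ac$, so clause a2 of Lemma \ref{mainx} (which is equivalent to a1 under $[abc]$) yields $a'b = ab$, hence $b \perp P$. Since the whole proof is just a direct unfolding of definitions followed by a one-line appeal to Lemma \ref{mainx}, there is no real obstacle here; the only thing worth emphasising is that it is precisely the equivalence a1 $\Leftrightarrow$ a2 (variations at $a$) that is being used, as opposed to the analogous conditions at $b$ or $c$ -- the point $a$ is singled out because it is the focus of the contact element $P$.
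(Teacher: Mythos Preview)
Your proof is correct and follows essentially the same route as the paper: unpack $P\subseteq \M(a)$, use $b\perp P$ to get $a'b=ab$ for $a'\in P$, and then invoke clause a1 of Lemma~\ref{mainx} to conclude $a'c=ac$; the converse via a2 is likewise what the paper does (it just says ``the other implication is similar'').
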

\begin{proof}  Assume $b\perp P$. Let $r$ denote $ab$ and $s$ denote $bc$, so $ac=r+s$. For 
any $a'\sim a$, $a'b=r$ implies $a'c=r+s$, by the 
$[abc]$-assumption (in the manifestation a1 in Lemma \ref{mainx}). Also $P \subseteq \M (a)$. Since $a'b=r$ for all 
$a'\in P$, we therefore 
have $a'c = r+s$ for all $a'\in P$, which is the condition $c\perp P$. The other 
implication is similar. \end{proof}

Similarly, if $[abc]$ and if $P$ is a contact element at $b$, 
we have that $a\perp P$ iff $c\perp P$. Finally, if $[abc]$  and if $P$ is a contact element at 
$c$, we have that $a\perp P$ iff $b \perp P$.

\subsection{Transversal orientation of contact elements}\label{rayQx}

Given a contact element $P$. The set of spheres touching 
$P$ falls in two classes: two such spheres are in the same
class if they touch another internally. To provide a contact element 
$P$ with a 
{\em transversal orientation} means to select one of these two classes of 
spheres; the selected spheres we describe as those that touches $P$ on 
the {\em negative} side (we also say: on the {\em inside}).

\medskip

Just as a contact element at $c$ may be presented as the touching set of any two 
spheres which touch each other at $c$, a transversally oriented 
contact element at $c$ may be presented as the touching set of any two spheres 
touching another, from the inside,  at $c$.

\medskip

Let $P$ be a contact element at $b$, and let $c\perp P$ and $bc=s$. 
Then the sphere $S(c,s)$ 
touches $P$. Let $P$ be 
equipped with a transversal orientation; 
then we say that $c$ 
{\em is on the positive side} of $P$ if the sphere $S(c,s)$ touches 
$P$ from the outside.

\medskip

\subsection{The ray given by a contact element} \label{85x} Recall that  $c=a\triangleright _{s}b$ is 
characterized by $bc=s$ and $[abc]$.
This gives rise to another characterization of $c=a\triangleright 
_{s}b$ in terms of $\perp$: let $P$ be the transversally oriented contact element $\M(b) 
\cap S(a,r)$,  where $S(a,r)$ touches $P$ on the inside. Then:  
if $c\perp P$ with $c$ on the positive side of $P$,  and  $bc=s$, then $c= 
a\triangleright _{s}b$. This follows from Proposition \ref{22sphx} (with $s=bc$).

Given a transversally  oriented contact element $P$ at $b$, and given an 
$s>0$. We shall describe a point $P\vdash s$ by the following 
procedure: pick a sphere $A=S(a,r)$ touching $P$ from the inside (so $r=ab$), so $[abc]$ with $c = a\triangleright _{s}b$. It 
follows from the above   that 
this only depends on the $s$ and the transversally oriented contact element $P$, 
but not on any particular choice of the sphere $A$ touching $P$ from 
the inside; we 
put $P\vdash s := a\triangleright _{s}b$. 
The notation suggests graphically the fact  that this point is 
orthogonal to $P$, at distance $s$.   The {\em ray generated by $P$} is defined by $s\mapsto P\vdash s$.

  \subsection{Inflation of spheres} \label{inflx}  
 Given a sphere $S(a,q)$, and given $t>0$. The {\em $t$-inflation} (or the {\em $t$-dilatation}) of 
this sphere is by definition the sphere $S(a,q+t)$.
\begin{prop} \label{consx}
If two spheres touch each other internally, the two $t$-inflated spheres likewise touch each other internally.
If the two first spheres are $S(a,r+s)$ and $S(b,s)$, respectively, with touching point $c$, then the touching point of the two $t$-inflated spheres $S(a,r+s+t)$, 
$S(b,s+t)$ is 
$a\triangleright _{s+t}b = a\triangleright _{t}(a \triangleright _{s}b) = a\triangleright _{t}c = b \triangleright_{t}c$.
\end{prop}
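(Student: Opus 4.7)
The plan is to decompose the statement into two parts: first, that the inflated spheres touch internally and identify the touching point from the axioms; second, that the several expressions for this touching point all agree, using the composition law for $\triangleright$ and Proposition~\ref{37x}.

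First I would invoke the hypothesis that $S(a,r+s)$ and $S(b,s)$ touch internally. By Axiom~\ref{internalx} this is equivalent to $ab = r$, and the touching point $c$ is, by construction (see the discussion after Axiom~\ref{internalx} and the definition of $\triangleright$), $c = a \triangleright_s b$; in particular $[abc]$ holds, with $bc = s$ and $ac = r+s$.

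Next I would apply Axiom~\ref{internalx} to the inflated spheres $S(a, r+s+t)$ and $S(b, s+t)$. The difference of their radii is still $r$, which equals $ab$, and $ab = r < r+s+t$, so the axiom applies: the two spheres touch internally, and the (focused) touching point is by definition $a \triangleright_{s+t} b$. Proposition~\ref{asssx} then gives the first non-trivial equality
\[
a \triangleright_{s+t} b \;=\; a \triangleright_t (a \triangleright_s b) \;=\; a \triangleright_t c.
\]

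The final equality $a \triangleright_t c = b \triangleright_t c$ is the only step that is not immediate from the axioms; it is exactly where the collinearity/associativity machinery of Section~3 pays off. Since $c = a \triangleright_s b$ we have $[abc]$, and Proposition~\ref{37x} applied to the aligned triple with $b$ in the middle yields $a \triangleright_t c = b \triangleright_t c$, as required. The main obstacle, though a mild one, is simply keeping the renamings of Proposition~\ref{37x} straight: one has to match its hypothesis $[a'ab]$ with our situation by taking $(a',a,b) \leftrightarrow (a,b,c)$, which is legitimate because $[abc]$ is symmetric under reversal and expresses precisely the alignment needed.
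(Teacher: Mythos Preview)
Your proof is correct and follows essentially the same route as the paper: first use Axiom~\ref{internalx} (the difference of radii is unchanged under $t$-inflation) to get the touching, then identify the touching point as $a\triangleright_{s+t}b$, reduce to $a\triangleright_t c$ via Proposition~\ref{asssx}, and finally obtain $b\triangleright_t c$ from Proposition~\ref{37x} and the collinearity $[abc]$. Your additional care in checking the hypothesis $ab<r+s+t$ of Axiom~\ref{internalx} and in spelling out the substitution for Proposition~\ref{37x} is welcome but does not change the argument.
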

\begin{proof} 
The proof of the touching assertion is identical to the classical proof, using that internal touching of spheres is equivalent to: difference of the radii  equals distance between centers; for our notion of touching, this is Axiom  \ref{internalx}. For the second assertion:
By definition of the $\triangleright$-construction, the first expression here is the touching point of the inflated spheres; it equals the next expression by Proposition \ref{asssx}. It in turn equals the third expression, since $c=a \triangleright _{s}b$ by construction. Finally, the fourth expression follows by Proposition \ref{37x}  from collinearity of $a,b$ and $c$. \end{proof}

There is a similar result for external touching, but then one of the 
centers has to be moved further away.

   \subsection{Flow of contact elements}\label{FCEx}
Given a transversally oriented contact element $P$ at $b$, as in Subsection 
\ref{rayQx}. The construction of the ray $s\mapsto P\vdash s$ 
given there can be enhanced to a parametrized family of transversally 
oriented contact elements $s\mapsto P\Vdash s$, with $P\vdash s$ 
as focus of $P \Vdash s$. Pick, as in Subsection \ref{rayQx}, a 
sphere $A = S(a,r)$ touching $P$ from the inside at $b$. The inflated 
sphere $S(a,r+s)$ contains $a\triangleright _{s}b = P \vdash s$, 
so we get a contact element $\M (P\vdash s)\cap S(a,r+s)$, and we 
take this as $P\Vdash s$. We have to see that this is independent of the 
choice of $A$. We know already that the focus $P\vdash s$ is 
independent of the choice. If we had chosen another $A'$ to represent 
$P$, the spheres $A'$ and $A$ touch each other from the inside at 
$b$, hence their $s$-inflated versions likewise touch each other from 
the inside, at $P\vdash s$, by Proposition \ref{consx}. This means 
that they  define the same 
contact element at this point.

Another description of this contact element $P\Vdash s$, again only see\-mingly 
dependent on the choice of the sphere $A=S(a,r)$, is 
$$P\Vdash s := \M (a\triangleright_{s}b)\cap S(a,r+s).$$

\section{Huygens' Theorem for hypersurfaces}\label{htHsx}
\begin{defn}  A 
{\em hypersurface} in $M$ is a subset $B\subseteq M$ which satisfies: 
for every $b\in M$, $\M (b) \cap B$ 
is a contact element. 
 
To give such $B$ a {\em transversal orientation} is 
to give every such contact element a transversal orientation.
\end{defn}

For suitable $s>0$, we aim at describing ``the {\em parallel surface} to 
$B$ at distance $s$ (in the positive direction)''.

Consider a transversally oriented hypersurface $B$. For each $b\in 
B$, we have a transversally oriented contact element $B(b):=\M (b) \cap 
B$, and therefore we have the ray which it generates.
 
If a point $x\in M$ has $x\perp B(b)$, one says that $b$ is a {\em 
foot} of $x$ on $B$. A given $x$ may have several feet on $B$; thus if 
for instance 
$x$ is the center of a sphere $B$, then every point $b\in B$ is a 
foot of $x$ on $B$.

Now consider, for a given $b\in B$, the ray generated by the 
transversally oriented contact element $B(b)$.  
Every point $x$ on this ray has 
$b$ as a foot on $B$.  We assume that for sufficiently small $s$, 
$b$ is the unique foot of $B(b)\vdash s$ on $B$. 
For given such $s$, 
we denote the set of points obtained as $B(b)\vdash s$ for some 
$b\in B$, by $B\vdash s$. Thus we have a bijection $B \to (B \vdash s)$.
Denote $B \vdash s$ by $C$, so by assumption, there is a bijection 
between $B$ and $C$, with $b\in B$ and $c\in C$ corresponding under 
the bijection if $b$ is the foot of $c$ (equivalently, if 
$c=B(b)\vdash s$).

We have to make the following assumption, which in the intended 
application is a weak one: if a point $c$ has a unique foot on $B$, then so 
does any $x\sim c$, and the two feet are neighbours.
\begin{prop} Under this assumption: if $b\in B$ and $c\in C$ correspond, then $\M(c) \cap 
S(b,s) = \M(c) \cap C$. In particular, $C$ is a hypersurface.
\end{prop}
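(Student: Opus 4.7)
The plan is to identify both sides of the claimed equality with the $s$-flow of the contact element $P:=B(b)$. First pick a sphere $A=S(a,r)$ touching $P$ from the inside; then $r=ab$ and, by the definition of $P\vdash s$ in Subsection \ref{85x}, $c=a\triangleright_{s}b$. Proposition \ref{consx} now gives that the inflated spheres $S(a,r+s)$ and $S(b,s)$ touch each other internally at $c$, so $\M (c)\cap S(a,r+s)=\M (c)\cap S(b,s)=P\Vdash s$. Hence it suffices to prove $\M (c)\cap C=\M (c)\cap S(b,s)$.

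For the inclusion $\M (c)\cap C\subseteq \M (c)\cap S(b,s)$, take $c'\sim c$ in $C$ and let $b'\in B$ be the corresponding foot, so $c'=B(b')\vdash s$. The extra assumption of the Proposition forces $b'\sim b$, so $b\in\M (b')\cap B=B(b')$. Since $c'\perp B(b')$ by the ray construction of Subsection \ref{85x}, applying the orthogonality relation to $b\in B(b')$ yields $c'b=c'b'=s$, i.e.\ $c'\in S(b,s)$. For the reverse inclusion I would take $x\sim c$ with $bx=s$; the weak assumption then furnishes a unique foot $b''\sim b$ of $x$ on $B$. The same orthogonality argument applied to $b\in B(b'')$ (using $b\sim b''$ and $b\in B$) gives $xb''=xb=s$. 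Once we also know that $x$ lies on the positive side of the transversally oriented $B(b'')$, the characterization of $B(b'')\vdash s$ in Subsection \ref{85x} forces $x=B(b'')\vdash s\in C$.

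Putting the two inclusions together, $\M (c)\cap C=\M (c)\cap S(b,s)$; the right-hand side is a contact element at $c$ (the touching set of the sphere $S(b,s)$), so the left-hand side is one too, and $C$ is a hypersurface. I expect the main obstacle to be precisely the \emph{positive-side} step in the second inclusion: arguing that $x\sim c$ lies on the positive side of $B(b'')$ (rather than on the opposite ray) whenever $c$ is on the positive side of $B(b)$ and $b''\sim b$. In the intended SDG model this is automatic because all the constructions respect $\sim$, but a clean axiomatic proof seems to demand either strengthening the weak assumption to include infinitesimal coherence of the transversal orientation or adding such a clause as a separate hypothesis; the twin orthogonality identities $c'b=c'b'=s$ and $xb''=xb=s$ are what really do the geometric work.
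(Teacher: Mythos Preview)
Your argument is essentially the paper's own: the same two inclusions, proved via the same orthogonality trick (using $b\in B(b')$ because $b'\sim b$). The opening paragraph identifying $\M(c)\cap S(b,s)$ with $P\Vdash s$ is harmless but unnecessary---the paper simply notes at the end that $\M(c)\cap S(b,s)$ is a contact element because $S(b,s)$ is a sphere through $c$.

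One point worth flagging: you are more scrupulous than the paper about the ``positive-side'' issue in the inclusion $\M(c)\cap S(b,s)\subseteq C$. The paper writes only that ``$x\perp B(b')$ and $b'x=s$ characterizes the point on $C$ corresponding to $b'$'', without mentioning the side; your worry that this really uses the full characterization from Subsection~\ref{85x}, which includes the positive-side clause, is legitimate. The paper simply leaves this implicit (presumably under the standing convention that all constructed maps preserve $\sim$, so the orientation cannot flip infinitesimally). Your diagnosis that a clean axiomatic treatment would want infinitesimal coherence of the transversal orientation as part of the ``weak assumption'' is fair, but for the purposes of matching the paper you may take it as tacitly assumed.
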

\begin{proof} Let $x\in \M (c)\cap S(b,s)$.  Let $b'$ be the foot of $x$ on $B$. Since $x\sim c$, 
$b'\sim b$. Since $bx=s$, and $x\perp B(b')$, we therefore have 
$b'x=s$. But $x\perp B(b')$ and $b'x=s$ characterizes the point on $C$ 
corresponding to $b'$; so $x\in C$.

Conversely, let  $c'\in C$ and $ c'\sim c$. Then $c'$ corresponds to a 
point $b'\in B$ with $b'\sim b$, implying that $c'=B(b')\vdash s$, 
hence $b'c'=s$; since $c'\perp B(b')$ and $b\in B(b')$, we have 
$bc'=s$; so $c' \in S(b,s)$.  So  $\M(c) \cap S(b,s) = \M(c) \cap C$. 

Since $\M(c) \cap 
S(b,s)$ is a contact element for every $c\in C$, it now follows that $C$ 
is a hypersurface. \end{proof}

 Recall that the $C$ of this Proposition was more completely denoted 
$B\vdash s$, and it deserves the name of ``hypersurface parallel to 
$B$ at distance $s$''. It inherits a transversal orientation from that of $B$.

We have therefore  a generalization of Huygens' Theorem,  stated  
in \cite{Arnold} p.\ 250. The surfaces $B\vdash s$ mentioned are the ``wave 
fronts'', or, the ``dilatations'' of $B$ (\cite{Lie2} p.\ 14-15). 
The Huygens Theorem stated in Section \ref{HTX} is the 
special case where $B=S(a,r)$.
\begin{thm} Given a transversally oriented hypersurface $B$ in $M$. 
Then for small enough 
$s$, we have another hypersurface $B\vdash s$, which is an envelope 
of the spheres $S(b,s)$ as $b$ ranges over $B$. We have $ 
B\vdash (s+t) = (B\vdash s)\vdash t$,  for $t$ and $s$ small enough.
\end{thm}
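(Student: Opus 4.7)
The statement asserts three things about $E := B \vdash s$ (for $s$ small enough): that it is a hypersurface, that it is an envelope of the family of spheres $S(b,s)$ ($b \in B$), and that $B \vdash (s+t) = (B \vdash s) \vdash t$. The first is already established by the preceding proposition, so the plan is to handle the envelope property and the flow property in turn, with the preceding proposition and Propositions \ref{asssx} and \ref{consx} doing most of the work.

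For the envelope property, I need to check that each sphere $S(b,s)$ touches $E$, and that each point of $E$ is touched by a unique $S(b,s)$. Given $b \in B$, let $c := B(b) \vdash s$, the point of $E$ corresponding to $b$ under the bijection $B \to E$ furnished by the preceding construction. The preceding proposition gives precisely $\M(c) \cap S(b,s) = \M(c) \cap E$, which is the defining condition that $S(b,s)$ touches $E$ at $c$. For uniqueness: if some $b' \in B$ has $S(b',s)$ touching $E$ at $c$ as well, then $\M(c) \cap S(b',s) = \M(c) \cap E = \M(c) \cap S(b,s)$, so Axiom \ref{dimx} says the spheres $S(b,s)$ and $S(b',s)$ touch each other at $c$; their common radius is $s$, so $b'c = s = bc$, and the uniqueness-of-foot assumption made just before the preceding proposition, combined with the characterization of the point corresponding to $b$ under the bijection, forces $b' = b$.

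For the flow property, fix $b \in B$ and set $c := B(b) \vdash s$. Pick a sphere $A = S(a,r)$ with $r = ab$ touching $B(b)$ from the inside, so that $c = a \triangleright_s b$ and $[abc]$ hold. The $s$-inflated sphere $A' := S(a, r+s)$ then contains $c$, and by Proposition \ref{consx} it touches $S(b,s)$ internally at $c$. Since $\M(c) \cap S(b,s) = \M(c) \cap E$ by the preceding proposition, we conclude that $A'$ touches the contact element $E(c) = \M(c) \cap E$ from the inside, i.e.\ $A'$ is a legitimate representative of the transversally oriented contact element $E(c)$. Applying the ray construction of Subsection \ref{85x} at parameter $t$ via $A'$ yields $E(c) \vdash t = a \triangleright_t c = a \triangleright_t(a \triangleright_s b)$, and Proposition \ref{asssx} rewrites this as $a \triangleright_{s+t} b = B(b) \vdash (s+t)$. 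Letting $b$ range over $B$, and invoking the bijection $B \to E$ to match up the indexing, gives the claimed set equality $(B \vdash s) \vdash t = B \vdash (s+t)$.

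The main obstacle is ensuring that the transversal orientation transports correctly from $B$ to $E$: one needs $A'$ to touch $E(c)$ on the \emph{same} side (the inside) as $A$ touches $B(b)$, so that the ray construction applied at $c$ uses the same notion of ``positive direction'' as the ray construction applied at $b$. This is exactly what Proposition \ref{consx} supplies, because internal touching of spheres is preserved under common inflation of the radii. Once this compatibility is secured, the semigroup relation is just a pointwise restatement of Proposition \ref{asssx}, and all that remains is to keep $s$, $t$, and $s+t$ within the range in which the foot-uniqueness hypothesis and the hypothesis on neighbours of feet (made before the preceding proposition) are valid, which is precisely the quantifier ``for small enough $s$ and $t$'' in the statement.
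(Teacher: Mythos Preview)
Your argument is correct and follows the paper's approach; indeed you supply far more detail than the paper itself, whose proof is a single sentence citing Proposition \ref{asssx} and Subsection \ref{85x} for the flow law and leaving the hypersurface and envelope claims to the preceding proposition and the bijection $B\to E$. Your explicit verification that the inflated sphere $A'=S(a,r+s)$ touches $E(c)$ from the inside (via Proposition \ref{consx}) is exactly the content of Subsection \ref{FCEx} on the flow of contact elements, which the paper is implicitly invoking. One small looseness: in your uniqueness step you deduce $b'=b$ from the foot-uniqueness hypothesis, but that hypothesis applies only once you know $c\perp B(b')$, and you have only established $b'c=s$; the paper (as in the proof of Theorem \ref{Huyx}) simply appeals to the bijection $b\mapsto B(b)\vdash s$ for uniqueness, which sidesteps this point and is the cleaner formulation here.
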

The last assertion follows from Proposition \ref{asssx}, together 
with the characterization of $P\vdash s$ in terms of 
$a\triangleright_{s}$ (Subsection  \ref{85x}).

\medskip

\begin{remark}  {\em Let us note that if $b\sim b'\in B$, then the two contact elements 
$P:=\M(b)\cap B$ and $P'=\M(b')\cap B$ are in united position: we 
say that two contact elements $P$ and $P'$ are in {\em united 
position} if they are neighbours in the manifold of contact elements, {\em and} if  $b\in P'$ and $b'\in P$; this notion plays a central 
role in the  work of S.\ Lie, \cite{Lie2} p.\ 39, or \cite{Lie1} p.\ 480.}\end{remark}

\medskip

The construction of rays given by ``vectors'' (pairs $a$ and $b$ of 
points) should be contrasted with the construction of the ``flow'' of 
contact elements $P$; this is in some sense the relationship between 
the Lagrangian and the Hamiltonian description of the process of 
propagation in geometrical optics, 
as in \cite{Arnold}. The present note began as an attempt to 
complete the essentially synthetic/metric account of this 
relationship, given in loc.cit. p.\ 250.



   
      \section{Models based on SDG}
\label{modelx}
We consider in the present Section models for 
 $\di$, $\#$, 
 and 
$<$, which are built from a (commutative) local ring $R$ with a strict total order 
$<$. So the set of invertible elements in $R$ fall in two 
disjoint classes $R_{>0}$ and $R_{<0}$, both stable under addition, 
 and with $R_{>0}$ stable under multiplication and containing $1$.  
We have $x>y$ if $(x-y) \in R_{>0}$, and $x-y$ if $(y-x)\in R_{<}$. For 
$x-y$ invertible, one  has the dichotomy: $x<y$ or $x>y$.
We write $x\# y$ for $y-x$ invertible. The {\em absolute value} 
function is the function $R_{<0}\cup R_{>0} \to R_{>0}$ given by $x 
\mapsto -x$ if $x<0$ and $x\mapsto x$ if $x>0$.

 We require in the present Section, that positive square roots of elements in $R_{>0}$ exist 
uniquely.

If $\vx =(x_{1},\ldots ,x_{n})\in 
R^{n}$ has at least one of the $x_{i}$s invertible, we say that $\vx$ 
is a {\em proper} vector. For a proper vector $\vx$, we ask that $\sum 
x_{i}^{2}$ is $>0$, so $\sqrt{\sum x_{i}^{2}} \in R_{>0}$ exists. So 
for a proper vector $\vx$
we may define $|\vx |:=\sqrt{\sum x_{i}^{2}}$, equivalently, using the canonical inner product $\langle -,- \rangle$,
$$|\vx|^2 = \langle \vx,\vx\rangle.$$

We say that vectors $\vx$ and $\vy$ in $R^{n}$ are {\em apart} (written $\vx \# 
\vy$) if $\vy - \vx$ is a proper vector; then $|\vy - \vx |\in 
R_{>0}$ defines a metric 
$\di (\vx , \vy )$,  
 the {\em distance} between $\vx$ and $\vy$.

We take for $\sim$ on $R^{n}$ the standard one from SDG, namely 
$$(x_{1}, \ldots ,x_{n})\sim (y_{1},\ldots , y_{n}) \mbox{  if  }
(x_{i}-y_{i})\cdot (x_{j}-y_{j}) =0 \mbox{ for all }i,j = 1, \ldots 
n.$$

For $\va\in R^n$ and $r>0$, the sphere $S(\va,r)$ is the set of $\vx\in R^n$ with $\langle \va-\vx, \va-\vx\rangle = r^2$. If $0\in S(\va.r)$, we thus have $\langle \va,\va\rangle  =r^2$. 
The monad $\M (0)$ is $D(n)$; elements $\vd$ in $D(n)$ satisfy $\langle \vd,\vd\rangle =0$ (but  $\langle \vd,\vd\rangle =0$ does not imply $\vd \in D(n)$ unless $n=1$).
Consider also the hyperplane $H=\va^{\perp}\subseteq R^n$ (where $\va$ is a proper vector). Then 
$$D(n) \cap S(\va,r) = D(n) \cap H;$$
for if $\vd\in  D(n) \cap S(\va,r)$, we have
$\langle \va-\vd,a-\vd \rangle =r^2$, and if we calculate the left hand side here, we get
$$\langle \va,\va\rangle - 2 \langle \vd,\va\rangle + \langle \vd,\vd\rangle = r^2 -2 \langle \vd,\va\rangle, $$
and this can only be $r^2$ if $\langle \vd,\va\rangle =0$, so $\vd\in \va^{\perp} =H$. Conversely, if $\vd\in D(n)\cap H$, the same calculation (essentially "Pythagoras") shows that $\vd\in S(\va,r)$.

Since the metric is invariant under translations, we therefore also have
\begin{prop}\label{fu1x}
For $A$ any sphere and $\vb\in A$ we have
$\M(\vb) \cap A = \M (\vb) \cap H$, where $\va$ is the center of $A$,  and  $H$ denotes the hyperplane orthogonal to $\vb-\va$ through $\vb$.\end{prop}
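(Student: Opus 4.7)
The plan is to reduce Proposition \ref{fu1x} to the already-established centered case by invoking translation invariance, exactly as hinted in the sentence ``Since the metric is invariant under translations, we therefore also have\dots'' that precedes the statement. The calculation immediately before the proposition proves the special case: if $\vc$ is a proper vector with $|\vc|=r$, so that $0\in S(\vc,r)$, then
\[D(n)\cap S(\vc,r)\;=\;D(n)\cap \vc^{\perp}.\]
What needs to be added is only the observation that the translation $\vx\mapsto \vx-\vb$ is a symmetry of all the structures in sight.

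First I would check that both the sphere/metric structure and the neighbour relation $\sim$ are translation-invariant on $R^{n}$. Translation invariance of $\di$ is immediate from $\di(\vx,\vy)=|\vy-\vx|$. For $\sim$, the defining condition $(x_{i}-y_{i})(x_{j}-y_{j})=0$ depends only on componentwise differences, so translating both $\vx$ and $\vy$ by the same vector leaves it unchanged. Consequently, for any fixed $\vb\in R^{n}$, the map $T:\vx\mapsto \vx-\vb$ sends spheres to spheres (of the same radius), sends $\M(\vb)$ to $\M(0)=D(n)$, and sends any hyperplane to its translate.

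Now I apply $T$ to the given data. Set $\vc:=\va-\vb$; because $\vb\in A=S(\va,r)$ we have $|\vc|=r$, so $0\in S(\vc,r)$ and $\vc$ is a proper vector. The image of $A$ under $T$ is $S(\vc,r)$, the image of $\M(\vb)$ is $D(n)$, and the image of $H$ (the hyperplane through $\vb$ orthogonal to $\vb-\va$) is the hyperplane through $0$ orthogonal to $\vb-\va$, namely $\vc^{\perp}$. Applying the previously displayed identity with this $\vc$ yields
\[D(n)\cap S(\vc,r)\;=\;D(n)\cap \vc^{\perp}.\]
Translating back by $+\vb$ gives the desired equality $\M(\vb)\cap A=\M(\vb)\cap H$.

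I do not anticipate a substantive obstacle: the entire proof is the observation that $\sim$ and $\di$ are translation-equivariant, combined with the computation already carried out at the origin. The only care point is making sure the hyperplane is defined with respect to the correct normal direction (it is $\vb-\va$, the vector from the center to the point on the sphere), which is precisely the direction that $T$ leaves unchanged.
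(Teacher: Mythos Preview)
Your proposal is correct and follows essentially the same approach as the paper: the paper proves the centered case by the displayed computation and then asserts the general case ``since the metric is invariant under translations,'' which is exactly the translation-reduction you carry out in detail. You even supply the extra verification (translation invariance of $\sim$) that the paper leaves implicit.
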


The following Proposition depends on $R$ being a model for the KL axiomatics. We shall use coordinate free notation, in particular, for an $n$-dimensional vector space $V$, we have a subset $D(V)\subseteq V$,  defined as the image of  $D(n)\subseteq R^n$	 under some linear isomorphism $R^n\to V$; it does not depend on the choice of such isomorphism.

\begin{prop}\label{fu2x}Let $H$ and $K$ be (affine) hyperplanes in an $n$-dimen\-sional vector space $V$, and assume $\vb\in H\cap K$. Then
$\M (\vb) \cap H \subseteq K$ implies $H=K$. \end{prop}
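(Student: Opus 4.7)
My plan is to translate everything to the origin and then invoke the KL axiom via a single coordinate change. First, by translating by $-\vb$, I may assume $\vb = 0$, so that $H$ and $K$ are linear subspaces of codimension one. Since $H$ is a hyperplane through $0$, I pick a linear isomorphism $V \cong R^n$ sending $H$ onto the coordinate hyperplane $\{x_n = 0\}$. Under this identification $\M(0) = D(n)$, and because $d_n = 0$ automatically forces $d_i d_n = 0$, one checks that $\M(0) \cap H = D(n-1) \times \{0\}$.

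Next I would express $K$ as the kernel of a surjective linear functional $\psi(\vx) = \sum_{i=1}^n c_i x_i$ vanishing at $0$. The hypothesis $\M(0) \cap H \subseteq K$ then reads: for every $(d_1, \ldots, d_{n-1}) \in D(n-1)$, $\sum_{i < n} c_i d_i = 0$. Specializing to $d_i = \epsilon$ and $d_j = 0$ for $j < n$, $j \neq i$ (with $\epsilon^2 = 0$), this reduces to $c_i \epsilon = 0$ for all $\epsilon \in D$, for each $i < n$. The one-dimensional KL axiom (uniqueness of the expansion $\epsilon \mapsto a + b\epsilon$ of a function $D \to R$) then forces $c_i = 0$. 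Hence $\psi(\vx) = c_n x_n$, and surjectivity of $\psi$ onto the local ring $R$ makes $c_n$ invertible, so $K = \{x_n = 0\} = H$.

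The step I expect to require the most care is not the KL application itself but the linear-algebraic preliminaries: writing $K$ as $\ker \psi$ with $\psi$ having an invertible coefficient (the genuine meaning of \emph{hyperplane} here, as in Proposition \ref{fu1x} where hyperplanes arise as $\va^\perp$ for proper vectors $\va$), and finding an adapted basis of $V$ in which $H$ is a coordinate hyperplane. These are routine for $V = R^n$ in the intended model, but worth pinning down since we are really working in a topos rather than the category of sets. Once they are in place the argument collapses to the single KL application above.
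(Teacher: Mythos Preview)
Your proof is correct and follows essentially the same approach as the paper's: translate to the origin, represent $K$ as the kernel of a surjective linear functional, and invoke the KL axiom to show this functional vanishes on $H$. The only difference is presentational: the paper argues coordinate-free, using that $D(V)\cap H = D(H)$ because $H$ is a linear retract of $V$ and then applying KL to the $(n-1)$-dimensional space $H$ in one stroke, whereas you fix a basis adapted to $H$ and apply the one-dimensional KL axiom coordinatewise.
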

\begin{proof} Again by parallel translation, we may assume that $\vb=\underline{0}$, so the hyperplanes $H$ and $K$ are linear subspaces of $V$ of  dimension $n-1$. So for dimension reasons, it suffices to prove $H\subseteq K$. Let $\phi:  V\to R$ be a surjective linear map with kernel $K$. To prove $H\subseteq K$, we should prove that $\phi$ annihilates $H$. By assumption, $\phi$ annihilates $D(V) \cap H$. Since $H$ is a linear retract of $V$, $D(V)\cap H = D(H)$ (see the proof of Proposition 1.2.4 in \cite{SGM}). So the linear map $\phi \mid _{H}:H \to R$ restricts to the zero map on $D(H)$. By the KL axiom (see e.g.\ \cite{SGM}, I.3, the zero map is the only linear map which does so, so $\phi$ restricts to $0$ on $H$.\end{proof}

Consider two spheres $A$ and $C$, with centers are $\va$ and $\vc$, respectively, and with $\va\# \vc$. Let $\vb\in A\cap C$. 
\begin{prop}\label{fu3x}If $\M (\vb) \cap A \subseteq C$, then $\M (\vb )\cap A  = \M (\vb) \cap C$. 
\end{prop}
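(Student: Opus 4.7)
The plan is to reduce the statement about spheres to the analogous statement about hyperplanes, which is exactly Proposition \ref{fu2x}, by invoking Proposition \ref{fu1x} to replace each monad-intersection with a sphere by the monad-intersection with the tangent hyperplane through $\vb$.

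First I would let $H$ denote the hyperplane through $\vb$ orthogonal to $\vb-\va$, and let $K$ denote the hyperplane through $\vb$ orthogonal to $\vb-\vc$. By Proposition \ref{fu1x} we have
\[
\M(\vb)\cap A = \M(\vb)\cap H \qquad\text{and}\qquad \M(\vb)\cap C = \M(\vb)\cap K.
\]
Next, I would translate the hypothesis $\M(\vb)\cap A \subseteq C$ into a hypothesis purely about hyperplanes: given $\vd \in \M(\vb)\cap H$, we have $\vd \in \M(\vb)\cap A$ by the first equality, so by assumption $\vd \in C$; since also $\vd \sim \vb$, we get $\vd \in \M(\vb)\cap C = \M(\vb)\cap K$, i.e.\ $\vd \in K$. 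Hence $\M(\vb)\cap H \subseteq K$.

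Now I would apply Proposition \ref{fu2x} to conclude $H = K$. Combining with the two equalities above,
\[
\M(\vb)\cap A = \M(\vb)\cap H = \M(\vb)\cap K = \M(\vb)\cap C,
\]
which is the desired conclusion.

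The main thing to watch is the legitimacy of applying Proposition \ref{fu2x}: it requires $\vb \in H \cap K$, which is immediate from the construction (both hyperplanes pass through $\vb$), and it requires $\M(\vb)\cap H \subseteq K$ (not merely $\subseteq \M(\vb)\cap K$), which is what the argument above establishes. There is no real obstacle here --- essentially all the work has already been done in Propositions \ref{fu1x} and \ref{fu2x}, and this proposition is the combined geometric interpretation of those two results, giving the axiom \ref{dimx} its validation in the SDG model.
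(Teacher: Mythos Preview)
Your proof is correct and follows essentially the same route as the paper's: introduce the tangent hyperplanes $H$ and $K$ via Proposition~\ref{fu1x}, translate the hypothesis into $\M(\vb)\cap H\subseteq K$, invoke Proposition~\ref{fu2x} to get $H=K$, and conclude. The only difference is that you spell out more carefully why the hypothesis of Proposition~\ref{fu2x} is met, which is a welcome clarification.
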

\begin{proof}Let $H$  be the hyperplane associated to $A,\vb$ as in Proposition \ref{fu1x}, and let $K$ similarly be the hyperplane associated to $C,\vb$.
So $$\M (\vb) \cap H   = M(b) \cap A \subseteq \M(\vb) \cap C = \M (\vb) \cap K.$$
 Then Proposition \ref{fu2x} gives that $H=K$, and therefore the middle equality sign in
$$\M(\vb) \cap A    =\M(\vb) \cap H   = \M(\vb) \cap K  = \M(\vb) \cap C.$$ 
\end{proof}

\begin{prop} \label{focusx} For any $\vx\in R^{n}$, the monad $\M (\vx)$ is focused, 
with $\vx$ as focus.
\end{prop}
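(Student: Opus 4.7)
\medskip

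\textbf{Proof proposal.} Recall that by the definition of the neighbour relation on $R^n$ used in Section \ref{modelx}, $\M(\vx) = \vx + D(n)$ where $D(n) = \{\vd \in R^n : d_i d_j = 0 \text{ for all } i,j\}$. I would break the proof into existence and uniqueness of a focus.

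For existence, the candidate focus is $\vx$ itself. We have $\vx \in \M(\vx)$ by reflexivity of $\sim$, and by the very definition of $\M(\vx)$, every $\vy \in \M(\vx)$ satisfies $\vy \sim \vx$. So $\vx$ witnesses the property required in Definition of ``focused''.

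The real content is uniqueness. Suppose $\vx' \in \M(\vx)$ also has $\vy \sim \vx'$ for every $\vy \in \M(\vx)$; write $\vx' = \vx + \ve$ with $\ve \in D(n)$. The plan is to show $\ve = \underline{0}$. For arbitrary $\vd \in D(n)$, the point $\vx + \vd$ lies in $\M(\vx)$, so $(\vx+\vd) \sim \vx'$, which means $(\vd - \ve) \in D(n)$. Expanding
$$(d_i - e_i)(d_j - e_j) = d_i d_j - d_i e_j - e_i d_j + e_i e_j = 0,$$
and using $d_i d_j = 0$ and $e_i e_j = 0$, we get the identity $d_i e_j + e_i d_j = 0$ for all $i,j$ and all $\vd \in D(n)$.

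The hard (but short) step is extracting $\ve = \underline{0}$ from this. I would specialize by taking $\vd = \epsilon\, \mathbf{u}_k$ for an arbitrary $\epsilon \in D \subseteq R$ (so $\epsilon^2 = 0$) and the $k$-th standard basis vector $\mathbf{u}_k$; this lies in $D(n)$. Substituting and taking $i = k$ with $j \neq k$ yields $\epsilon e_j = 0$, and taking $i = j = k$ yields $2 \epsilon e_k = 0$ (and hence, after replaying with a different index choice, $\epsilon e_k = 0$ directly). Thus for every $j$ the map $D \to R$ given by $\epsilon \mapsto \epsilon e_j$ is identically zero. By the KL axiom (which gives uniqueness of the affine representation $\epsilon \mapsto a + b\epsilon$ of any function $D \to R$), this forces $e_j = 0$ for each $j$, so $\ve = \underline{0}$ and $\vx' = \vx$. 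The main obstacle is simply the bookkeeping in specialising $\vd$ and invoking the KL axiom cleanly; once that is done the proposition is immediate.
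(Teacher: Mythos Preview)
Your argument is correct and is essentially the paper's own proof: the paper also reduces to $\M(\underline{0})=D(n)$, expands $(x_i-d_i)(x_j-d_j)=0$ using $x_ix_j=d_id_j=0$, specializes $\vd$ to $(d,0,\ldots,0)$ with $d\in D$, and then invokes the KL axiom to cancel the universally quantified $d$ and conclude each coordinate vanishes. The only cosmetic differences are that you keep the basepoint $\vx$ general (using $\vx'=\vx+\ve$) rather than translating to $\underline{0}$, and you spell out the trivial existence half; your ``replay with a different index'' to avoid the factor $2$ is an optional extra (the paper simply uses $2\epsilon e_k=0$ and cancels $2$).
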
 
\begin{proof}  For simplicity of notation, we prove that the monad $\M 
(\underline{0})=D(n)$ is focused. Now 
 $D(n)$ may be described as $\{\vd =(d_{1}, \ldots 
,d_{n}) \in R^{n}
\mid d_{i}\cdot d_{j}=0 \mbox{  for all $i,j$  }\}$. For $D(1)$, one writes just 
$D$; so $d\in D$ means $d^{2}=0$. So assume that $\vx =(x_{1}, 
\ldots ,x_{n})\in D(n)$ has $\vx \sim  \vd$ for all $\vd \in D(n)$. This 
means that for all $i,j$, we have $0= (x_{i}-d_{i})\cdot (x_{j}-d_{j})$; but
$$(x_{i}-d_{i})\cdot (x_{j}-d_{j})= -x_{i}\cdot d_{j}-x_{j}\cdot 
d_{i},$$
using $x_{i}\cdot x_{j}=0$ and $d_{i}\cdot d_{j} =0$. Take in 
particular $\vd$ of the form $(d, 0, \ldots,0)$, with $d\in D$ and take $i=j=1$. Then the 
equation gives for all $d\in D$ that $0= -2 x_{1}\cdot d$, so for all $d\in D$, we have 
$x_{1}\cdot d =0$. By cancelling the universally quantified $d$, we 
get $x_{1}=0$, by the basic  axiom for SDG, see \cite{SDG} I.1. 
Similarly, we get $x_{2}=0$ etc., so $\vx = \underline{0}$.\end{proof}

More generally, one may prove that for suitable non-degenerate subspaces 
$H\subseteq R^{n}$, the set $\M (\vz) \cap H$ is focused (for $\vz\in H$). This 
applies e.g.\ to a hyperplane $H$ (zero set of a proper affine map 
$R^{n}\to R$); for, then $\M \vz \cap H$ is $equiv D(n-1)$. It also applies to spheres, which is our main 
concern.

A subset $N$ of a monad $\M (\vx)$, with $\vx\in N$, need not be 
focused. Consider for example some $\epsilon$ with 
$\epsilon^{2}=0$, and consider the set $ \{ \epsilon\cdot \vx \mid \vx \in 
R^{n}\}$. It is a subset of $\M (\underline{0})=D(n)$ and contains $\underline{0}$. But 
it is not focused: {\em any} pair of elements $\epsilon\cdot \vx$ and 
$\epsilon \cdot \vy$ in it are neighbours. In ``geometry based on the ring of 
dual numbers ${\mathbb R}[\epsilon]$" (as in Hjelmslev's 
\cite{Hjelmslev}), one may 
define a $\sim$-relation, based on elements $d$ of square 0, but the 
resulting monads will not be focused, essentially by the above 
argument. For, in \cite{Hjelmslev}, there is a fixed $\epsilon \in R$ such 
that every element $d\in R$ with $d^{2}=0$ is of the form $\epsilon 
\cdot x$.

We shall prove that in $R^n$, with metric derived from the standard inner product $\langle-,-\rangle$, the touching of spheres  is focused.
We first prove 

 \begin{prop}\label{87xx} Let $U\subseteq R^n$ be a finite dimensional  linear subspace 
 (meaning here that $U$ is a linear direct summand in $R^n$); 
 Then the following are equivalent for a vector $\va\in R^n$ (assumed to be proper, i.e.\ $\va \#0$)
 
 1) $\va \perp U$
 
 2) for all $\vd\in D(n)\cap U$, we have $|\va| = |\va+\vd|$.
 \end{prop}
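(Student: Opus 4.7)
The strategy is to reformulate condition (2) algebraically by expanding the squared norm, which reduces (2) to an infinitesimal orthogonality statement, and then to upgrade this to full orthogonality by invoking the KL axiom, exactly as in the proof of Proposition \ref{fu2x}.

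First I would expand $|\va+\vd|^{2} = \langle \va+\vd,\va+\vd\rangle = |\va|^{2} + 2\langle \va,\vd\rangle + \langle \vd,\vd\rangle$. Since $\vd\in D(n)$ satisfies $\langle \vd,\vd\rangle = 0$, this gives $|\va+\vd|^{2} = |\va|^{2} + 2\langle \va,\vd\rangle$. Here $\va+\vd$ is itself proper: $\va$ has some invertible coordinate $a_{i}$, and $a_{i}+d_{i}$ is then invertible since $d_{i}$ is nilpotent and $R$ is local (so $\sum (a_{i}+d_{i})^{2}$ differs from $\sum a_{i}^{2}\in R_{>0}$ by a nilpotent, hence is itself in $R_{>0}$). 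By the assumed uniqueness of positive square roots, $|\va|=|\va+\vd|$ is therefore equivalent to $\langle\va,\vd\rangle=0$. Thus condition (2) is equivalent to the statement that $\langle \va,\vd\rangle = 0$ for all $\vd\in D(n)\cap U$.

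With this reformulation, interpreting $\va\perp U$ in (1) as $\langle \va,\vu\rangle = 0$ for all $\vu\in U$, the direction (1)$\Rightarrow$(2) is immediate, as $D(n)\cap U\subseteq U$. For the converse, consider the linear map $\phi:U\to R$ defined by $\phi(\vu) := \langle \va,\vu\rangle$. Since $U$ is a linear direct summand of $R^{n}$, we have $D(n)\cap U = D(U)$ (the retract argument cited in the proof of Proposition \ref{fu2x}, from Proposition 1.2.4 in \cite{SGM}), and the assumption says that $\phi$ vanishes on $D(U)$. By the KL axiom, the zero map is the only linear map $U\to R$ annihilating $D(U)$, so $\phi\equiv 0$, i.e.\ $\va\perp U$.

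The only genuinely delicate point is the invocation of KL via the identification $D(n)\cap U=D(U)$; everything else is essentially a one-line computation with the inner product. In particular, no new geometric content is needed beyond what was used to establish Propositions \ref{fu1x}--\ref{fu2x}, and the present statement may be viewed as the ``first-order'' reformulation of the inner product orthogonality relation with respect to the linear subspace $U$.
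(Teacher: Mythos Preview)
Your proof is correct and follows essentially the same approach as the paper: expand the squared norm to reduce condition (2) to $\langle \va,\vd\rangle=0$ for all $\vd\in D(n)\cap U$, then invoke the retract identification $D(n)\cap U=D(U)$ and the KL axiom (exactly as in Proposition~\ref{fu2x}) to upgrade this to $\va\perp U$. Your treatment is in fact slightly more explicit about why $\va+\vd$ is proper and why equality of norms follows from equality of their squares, but the argument is otherwise identical.
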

 \begin{proof}Since $\va$ is proper, then so is $\va+\vd$, so the two norms mentioned are $>0$, so their equality is equivalent to the equality of their squares,
 i.e. to
 $$\langle \va,\va \rangle = \langle \va+\vd, \va+\vd \rangle = \langle \va\va \rangle + 2\langle \va,\vd \rangle .$$ So the Proposition says that 
 $\va\perp U$ iff $\langle \va, \vd \rangle  =0$ for all $\vd\in D(n)\cap U$; or, $\va\perp U$ iff  the linear functional $\langle \va, -\rangle :V\to R$ restricts to $0$ on $D(n)\cap U$. The latter assertion is equivalent (like in the proof of Proposition \ref{fu2x})  to saying that the functional restricts to $0$ on all of $U$, i.e.\ to $\va\perp U$.
 \end{proof}

  Translated into geometric terms with $\vb\in U$, where $U$ is an affine subspace of $R^n	$, this implies,  for $\va$ apart from $U$:
  \begin{prop}\label{footx} Let $\va\in R^n$ and $\va$ apart from $U$. If $\vb$ is the foot  (orthogonal projection) of $\va$ on $U$, then $\va$ has the same distance to all  points $\vb' \in U$ with $\vb' \sim \vb$; and conversely.\end{prop}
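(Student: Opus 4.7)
The plan is to reduce the affine statement to the purely linear statement of Proposition \ref{87xx} by a translation. Let $\vw := \va - \vb$, and let $U_{0} := \{\vb' - \vb \mid \vb'\in U\}$ be the linear subspace of $R^{n}$ obtained by translating $U$ so that $\vb$ is at the origin. Since $\vb\in U$ and $\va$ is apart from $U$ (in particular, apart from $\vb$), the vector $\vw$ is proper. The condition that $\vb$ is the foot of $\va$ on $U$ is, by definition of orthogonal projection, exactly the condition $\vw \perp U_{0}$.

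Next I would translate the variation condition. The points $\vb'\in U$ with $\vb'\sim \vb$ are precisely those of the form $\vb' = \vb + \vd$ with $\vd\in D(n)\cap U_{0}$, and under this correspondence
$$|\va - \vb'| = |\vw - \vd|,$$
so the condition ``$\va$ has the same distance to all $\vb'\in U$ with $\vb'\sim \vb$'' becomes ``$|\vw| = |\vw - \vd|$ for all $\vd\in D(n)\cap U_{0}$''. Since $D(n)\cap U_{0}$ is symmetric under $\vd \mapsto -\vd$, this is the same as ``$|\vw| = |\vw + \vd|$ for all $\vd\in D(n)\cap U_{0}$'', which is condition 2) of Proposition \ref{87xx} (with $U_{0}$ in place of $U$).

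Now I apply Proposition \ref{87xx} to the proper vector $\vw$ and the linear subspace $U_{0}$: condition 1) of that proposition, namely $\vw \perp U_{0}$, is equivalent to condition 2). Reading this back through the translation yields the desired equivalence: $\vb$ is the foot of $\va$ on $U$ iff $|\va - \vb| = |\va - \vb'|$ for every $\vb'\in U$ with $\vb'\sim \vb$. Both directions of Proposition \ref{footx} then follow at once.

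The only thing to verify beyond bookkeeping is that $U_{0}$ is indeed a linear subspace qualifying as the ``$U$'' of Proposition \ref{87xx}, i.e.\ a linear direct summand of $R^{n}$; this is automatic since $U$ is presupposed to be an affine subspace of $R^{n}$ of the sort under consideration, so its linear part $U_{0}$ is such a summand. Apart from this, no real obstacle arises: the proof is essentially the Pythagoras computation that was already packaged inside Proposition \ref{87xx}, re-expressed at an arbitrary base point $\vb$ instead of at the origin.
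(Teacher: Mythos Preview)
Your proof is correct and is exactly the approach the paper takes: the paper's ``proof'' is literally the one-line remark that Proposition~\ref{footx} is just Proposition~\ref{87xx} ``translated into geometric terms with $\vb\in U$,'' and you have spelled out that translation in detail. The only extra care you took---noting the symmetry $\vd\mapsto -\vd$ to match the sign in condition~2)---is a harmless nicety.
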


 Now consider the special case where the affine subspace $U$ is a hyperplane $H$, so of dimension $n-1$.
  We consider in the following spheres $A$, whose center $\va$ are apart from $H$, so $\di(\va,\vx)$ is defined for every $\vx\in H$.
 Spheres $A$ in $R^n$ have dimension $n-1$. Then one has (like in the Proposition \ref{fu3x})  that if
 $\vb\in A\cap H$ has $\M (\vb) \cap H \subseteq  A$ or  $\M (\vb)\cap A \subseteq H$, then we have equality $\M(\vb) \cap H= \M(\vb) \cap A$, i.e.\  $H$ and $A$ touch each other in $\vb$.
 Therefore
 \begin{prop}\label{touchxx} 1) Let $H$ be a hyperplane, and let a point $a$ (apart from $H$) have foot $b$ on $H$. Then $H$ touches the sphere $A:=S(a,r)$ in $b$, where $r=ab$; 2) Conversely, if a sphere with center $a$ touches $H$ in a point $b$, then $b$ is the foot of $a$ on $H$.\end{prop}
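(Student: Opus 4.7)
The plan is to derive both parts directly from Proposition \ref{footx} together with the hyperplane version of the ``same dimension forces equality'' observation stated immediately before the Proposition (which mirrors Proposition \ref{fu3x}, with one of the spheres replaced by the hyperplane $H$).

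For part 1), I would start by noting that $\vb \in H$ and, since $r = \di(\va,\vb)$, also $\vb \in A = S(\va,r)$. Applying Proposition \ref{footx} to the foot $\vb$ of $\va$ on $H$, every $\vb' \in H$ with $\vb' \sim \vb$ satisfies $\di(\va,\vb') = \di(\va,\vb) = r$; that is, $\vb' \in A$. This gives the one-sided inclusion $\M(\vb) \cap H \subseteq \M(\vb) \cap A$. By the hyperplane analogue of Proposition \ref{fu3x} recorded in the paragraph preceding the statement, such a one-sided inclusion between $\M(\vb) \cap H$ and $\M(\vb) \cap A$ forces equality, which by definition says that $H$ touches $A$ at $\vb$.

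For part 2), assume conversely that $A = S(\va,r)$ touches $H$ at $\vb$, so $\M(\vb) \cap A = \M(\vb) \cap H$ and $\vb \in A \cap H$, in particular $r = \di(\va,\vb)$. Take any $\vb' \in H$ with $\vb' \sim \vb$; then $\vb' \in \M(\vb) \cap H = \M(\vb) \cap A$, so $\di(\va,\vb') = r = \di(\va,\vb)$. Thus $\va$ has the same distance to every neighbour of $\vb$ in $H$, and the converse half of Proposition \ref{footx} then identifies $\vb$ as the foot (orthogonal projection) of $\va$ on $H$.

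I do not expect a serious obstacle. The only point that requires a little care is citing the right ``dimension'' statement in part 1): we are comparing a sphere with a hyperplane rather than two spheres, so Proposition \ref{fu3x} is not directly applicable, but the preceding remark in the text supplies exactly the version needed, since a sphere and a hyperplane in $R^n$ both have monad intersections modelled on $D(n-1)$, and the earlier argument via Proposition \ref{fu2x} goes through unchanged.
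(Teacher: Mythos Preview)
Your proof is correct and follows essentially the same route as the paper's: for part 1) you use Proposition \ref{footx} to obtain $\M(\vb)\cap H \subseteq A$ and then invoke the hyperplane analogue of Proposition \ref{fu3x} stated just before the Proposition to upgrade this to touching; for part 2) you reverse the argument via the converse clause of Proposition \ref{footx}. The paper's proof is slightly terser but identical in structure and in the ingredients cited.
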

 \begin{proof} For 1): Since $b$ is the foot of $a$, we have $ab' =r$ for all $b'\in H$ with $b'\sim b$, which is to say
 $\M (b)\cap H \subseteq S(a,r)$, which as argued is the touching condition. -- For 2), the assumption gives that $\M (b) \cap H \subseteq A$, so all points $b'\in H$ with $b'\sim b$ have same distance to $a$, so $b$ is the foot of $a$ on $H$, by  Proposition \ref{footx} (the "conversely"-part).
 \end{proof}
 
 \begin{prop}\label{perx} If $A$ and $B$ are spheres with centres $a$ 
and $b$, respectively (with $a\# b$), then if $x$ and $y$ are in 
$A\cap B$, $\langle x-y, a-b \rangle =0$
\end{prop}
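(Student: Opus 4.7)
The plan is to expand the four sphere-membership equations using the bilinearity of the inner product and then subtract them in pairs to isolate $\langle x-y, a-b\rangle$. Writing $r$ for the radius of $A$ and $s$ for that of $B$, membership of $x$ and $y$ in $A\cap B$ gives the four scalar identities
\begin{align*}
\langle x-a, x-a\rangle &= r^{2}, & \langle y-a, y-a\rangle &= r^{2},\\
\langle x-b, x-b\rangle &= s^{2}, & \langle y-b, y-b\rangle &= s^{2}.
\end{align*}

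First I would subtract the two equations coming from $A$. Expanding $\langle x-a,x-a\rangle - \langle y-a,y-a\rangle = 0$ and using $\langle x,x\rangle$-type cancellation, the result is $\langle x,x\rangle - \langle y,y\rangle = 2\langle x-y, a\rangle$. I would perform the same operation on the two $B$-equations to obtain $\langle x,x\rangle - \langle y,y\rangle = 2\langle x-y, b\rangle$.

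Then I would subtract these two derived identities; the left-hand sides cancel and we are left with $2\langle x-y, a-b\rangle = 0$. Since $2$ is invertible in $R$, this gives the desired conclusion $\langle x-y, a-b\rangle = 0$.

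There is no real obstacle here: the argument is a direct coordinate-free version of the classical observation that the chord $x-y$ of two intersecting spheres is perpendicular to the line joining the centres, and it uses only bilinearity of $\langle-,-\rangle$ together with the definition of a sphere given at the start of Section \ref{modelx}. The hypothesis $a\# b$ is not needed in the computation itself; it is only used to make the statement of perpendicularity meaningful (ensuring $a-b$ is a proper vector).
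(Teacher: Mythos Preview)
Your argument is correct and is precisely the ``simple arithmetic'' the paper alludes to: it too starts from the four identities $\langle x-a,x-a\rangle=r^{2}$, $\langle x-b,x-b\rangle=s^{2}$, and the analogues for $y$, and leaves the expansion to the reader. Your observation that $2$ is invertible (since $1\in R_{>0}$ and $R_{>0}$ is closed under addition) and your remark on the role of $a\#b$ are both apt.
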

\begin{proof} 
Let $r$ and $s$ be the radii of the two spheres. 
Then we have  $\langle (x-a), (x-a)\rangle =r^{2}$ and 
$\langle (x-b), (x-b)\rangle =s^{2}$, and similarly for $y$. Then 
$\langle x-y, a-b \rangle =0$ follows by simple arithmetic. \end{proof}
 Since feet (orthogonal projections) are unique, it follows that a sphere can touch a hyperplane in at most one point; and furthermore, the touching set is focused, being of the form $\M (b)\cap H$ for a hyperplane in $R^n	$,  so is of the form $D(n-1)$. This proves the first assertion in
 \begin{cor}\label{AHx} In $R^n$, the touching of spheres with hyperplanes is focused. Also, the touching of two spheres (non-concentric in the sense that their centers are apart) is focused. 
 \end{cor}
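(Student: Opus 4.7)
My plan is to dispose of the first assertion by citing the discussion immediately preceding the corollary (uniqueness of feet gives a unique touching point, and the touching set is $\M(\vb) \cap H \cong D(n{-}1)$, focused by a hyperplane-version of Proposition~\ref{focusx}), and to concentrate on the second assertion, about two spheres $A = S(\va,r)$ and $B = S(\vc,s)$ with $\va \# \vc$.

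The strategy is to reduce the sphere--sphere situation to the sphere--hyperplane situation by producing a common tangent hyperplane at the touching point. Given a touching point $\vb$, Proposition~\ref{fu1x} supplies affine hyperplanes $H_A$ through $\vb$ perpendicular to $\vb-\va$ and $H_B$ through $\vb$ perpendicular to $\vb-\vc$, with $\M(\vb)\cap A = \M(\vb)\cap H_A$ and $\M(\vb) \cap B = \M(\vb) \cap H_B$. The touching hypothesis then reads $\M(\vb)\cap H_A = \M(\vb) \cap H_B$, and Proposition~\ref{fu2x} forces $H_A = H_B$. Since the orthogonal complement of a hyperplane is one-dimensional, $\vb - \va$ and $\vb - \vc$ are scalar multiples of each other, so $\vb$ lies on the affine line $\ell$ through $\va$ and $\vc$. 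Focusedness of the touching set is then immediate: after translating $\vb$ to $\underline{0}$, the set $\M(\vb)\cap A = \M(\vb) \cap H_A$ becomes $D(n) \cap V = D(V)$ for a linear $(n{-}1)$-dimensional $V$, and the argument of Proposition~\ref{focusx}, applied in a basis of $V$, gives focusedness with focus $\underline{0}$.

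For uniqueness, any other touching point $\vb'$ also lies on $\ell$ by the same argument. Writing points of $\ell$ as $\va + \lambda(\vc-\va)$, the condition $|\vb' - \va|^2 = r^2$ together with unique existence of positive square roots yields $\lambda = \pm r/|\vc-\va|$, so there are at most two candidates; and if $\vb \neq \vb'$, they must be the two antipodes of $A\cap\ell$. Setting $t := |\vc-\va|$, the distances of these two points to $\vc$ are $|r - t|$ and $r+t$, and both equalling $s$ forces $(r-t)^2 = (r+t)^2$, i.e.\ $4rt = 0$. This contradicts $r \in R_{>0}$ and $\va \# \vc$, both of which make their respective quantities invertible.

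The main obstacle, I expect, is this last uniqueness step: one must use the hypothesis $\va \# \vc$ precisely (not merely $\va \ne \vc$) to ensure $t$ is invertible; without this, the antipodal alternative is not excluded --- which is exactly what happens for concentric spheres, and is why the statement rules that case out. Everything else is a direct assembly of Propositions~\ref{fu1x}, \ref{fu2x}, and \ref{focusx}.
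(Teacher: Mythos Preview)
Your argument is correct and broadly parallel to the paper's, but the two diverge in how uniqueness of the touching point is established.

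For focusedness of the touching set, both you and the paper reduce to the sphere--hyperplane case. Your route via Propositions~\ref{fu1x} and~\ref{fu2x} (showing that the tangent hyperplanes $H_A$ and $H_B$ at $\vb$ coincide, whence $\vb-\va$ and $\vb-\vc$ are proportional and $\vb$ lies on the line $\ell$ through the centers) is in fact more explicit than the paper's: the paper simply \emph{defines} $H$ as the hyperplane through $\vb$ orthogonal to $\va-\vc$, asserts that $\vb$ is the foot of $\va$ on $H$, and invokes Proposition~\ref{touchxx}. Your derivation supplies the justification behind that assertion.

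The real difference is in the uniqueness step. You show that any second touching point $\vb'$ also lies on $\ell$, locate the two points of $A\cap\ell$ as antipodes, and derive the contradiction $4rt=0$ from both having distance $s$ to $\vc$. The paper instead invokes Proposition~\ref{perx}: for any $\vb,\vb_1\in A\cap C$ one has $\langle\vb-\vb_1,\va-\vc\rangle=0$, so $\vb_1$ lies in the same hyperplane $H$ (orthogonal to $\va-\vc$, through $\vb$); since $\vb$ and $\vb_1$ are then both the foot of $\va$ on $H$, they coincide by uniqueness of feet. This is shorter and sidesteps any square-root or antipode bookkeeping.

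One small caution in your version: the step ``$\lambda^2=(r/t)^2$ yields $\lambda=\pm r/t$'' is not literally a consequence of unique positive square roots, since $\lambda$ need not be order-comparable to $0$. It does follow from $R$ being local: $(\lambda-r/t)(\lambda+r/t)=0$ together with the invertibility of their difference $2r/t$ forces one factor to be a unit and hence the other to vanish.
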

 \begin{proof} To prove the second assertion, let the spheres be $A$ and $C$, with centers $\va$ and $\vc$, respectively (with $\va\#\vc$), and assume that $A$ and $C$ touch in a point $\vb$. Let $H$ be the hyperplane through $\vb$  and orthogonal to the line connecting $\va$ and $\vc$. Then $\vb$ is the foot of $\va$ on $H$.  So by Proposition \ref{touchxx}, $A$ touches $H$ in $\vb$. Similarly $C$ touches $H$ in $\vb$, so 
 $$\M(\vb) \cap A =   \M(\vb) \cap  H=\M(\vb) \cap C,$$
 and the middle set is known to be focused, since $H$ is a hyperplane i $R^n$. 
 We finally have to argue that the touching point $\vb$ is unique. 
 If $\vb_{1}$ were another point in which the spheres touch, the hyperplane $H_{1}$ through $\vb_{1}$ and orthogonal to the line from $\va$ to $\vc$ is the same as $H$ by Proposition \ref{perx}, so $\vb_{1}$, being the foot of $\va$ on $H_{1}=H$, is the same as $\vb$,
   proving the uniqueness of a possible touching point $b$. \end{proof}

 \begin{remark} \label{redherring} {\em If $A$ and $H$ are as in Proposition \ref{touchxx}, we have of course $\M(\vb)\cap H\subseteq A\cap H$, but we cannot conclude that  $\M(\vb)\cap H$ equals  $A\cap H$; to wit, the unit sphere $A$ in $R^3$ with center $(0,0,1)$ touches the $xy$-plane $H$ with touching set $D(2)\times \{0\}$, but $A\cap H = \{(x,y,0)\mid x^2 + y^2 =0\}$, which is in general larger than $D(2)\times \{0\}$. (In fact the set $\{(x,y)\in R^2 \mid x^2 +y^2 =0\}$ has been a puzzling "red herring" since the early days of SDG; in what sense is it an infinitesimal object?) So the touching of spheres $A$ and hyperplanes $H$ in $b$ need not be "clean" in the sense that the touching set $\M (b) \cap H$ equals $A\cap H$. In $R^2$, this "cleanness" can be asserted, see the preliminary version \cite{arx} (where it is incorrectly stated for general $R^n$).} \end{remark}

\bigskip

\small

\noindent Dept.\ of Mathematics, Aarhus University

\noindent Feb.\ 2017

\noindent kock (at) math.au.dk

\end{document}